\newtheorem{theorem}{Theorem}[section]
\newtheorem{proposition}[theorem]{Proposition}
\newtheorem{lemma}[theorem]{Lemma}
\newtheorem{definition}[theorem]{Definition}
\newtheorem{remark}[theorem]{Remark}
\newtheorem{assumption}[theorem]{Assumption}
\newcommand{\kblue}{\textcolor[rgb]{0.00,0.00,0.00}} %
\def\ASl1{\texttt{AS-$\ell_1$}}
\def\N{\mathbb{N}}
\newcommand{\R}{\mathbb R} 
\DeclarePairedDelimiter{\norm}{\lVert}{\rVert} 
\DeclarePairedDelimiter{\abs}{\lvert}{\rvert} 
\DeclareMathOperator*{\argmin}{Argmin} 
\DeclareMathOperator*{\argmax}{Argmax} 
\DeclareMathOperator{\sign}{sgn} 
\def\eps{\epsilon}
\begin{document}

\title{Minimization over the $\ell_1$-ball using an active-set non-monotone projected gradient}

\author{A.~Cristofari\footnote{Dipartimento di Matematica ``Tullio Levi-Civita'', Universit\`a di Padova
              ({\tt andrea.cristofari@unipd.it, rinaldi@math.unipd.it})}\addtocounter{footnote}{5} \and M. De Santis\footnote{Dipartimento di Ingegneria Informatica, Automatica e Gestionale, Sapienza Universit\`{a} di Roma
              ({\tt mdesantis@diag.uniroma1.it, lucidi@diag.uniroma1.it})} \\
              \and~S. Lucidi$^{**}$~\and~F.~Rinaldi$^*$}
 \date{\today}

\maketitle

\begin{abstract}
The $\ell_1$-ball is a nicely structured feasible set that is widely used in many fields (e.g., machine learning, statistics and signal analysis) 
to enforce some sparsity in the model solutions. In this paper, we devise an active-set strategy for efficiently dealing with minimization problems over 
the $\ell_1$-ball and embed it into a tailored algorithmic scheme that makes use of a non-monotone first-order approach to explore the given subspace at
each iteration. We prove global convergence to stationary points. Finally, we report numerical experiments, on two different classes of instances, showing the effectiveness of the algorithm.

\end{abstract}
{\small\bf Key Words:} Active-set methods, $\ell_1$-ball, LASSO, Large-scale optimization.

\section{Introduction}
In this paper, we focus on the following problem:
\begin{equation}\label{probl1}
\begin{split}
& \min \, \varphi(x) \\
& \norm{x}_1 \le \tau,
\end{split}
\end{equation}
where $\varphi \colon \R^n \to \R$ is a function whose gradient is Lipschitz continuous with constant $L>0$, $\norm x_1$ denotes the $\ell_1$-norm of the vector $x$ and $\tau$ is a suitably chosen positive parameter.

Problem~\eqref{probl1} includes, as a special case, the so called \textit{LASSO problem}, obtained when
\[
\varphi(x) = \norm{Ax-b}^2,
\]
with $A$ and $b$ being a $m \times n$ matrix and a $m$-dimensional vector, respectively. Here and in the following, $\|\cdot\|$ denotes the Euclidean norm.
Loosely speaking, in LASSO problems the $\ell_1$-norm constraint is able to induce sparsity in the final solution,
and then these problems are widely used in statistics to build regression models with a small number of non-zero coefficients~\cite{efron2004least,tibshirani1996regression}.

Standard optimization algorithms (like, e.g., interior-point methods), besides being very expensive when the number of variables increases, do no properly exploit the main features and structure of the considered problem. This is the reason why, in the last decade, a number of first-order methods have been considered in the literature to deal with problem \eqref{probl1}. Those methods can be divided into two main classes: projection-based approaches, like, e.g.,  gradient-projection methods \cite{duchi2012projected,schmidt2008structure} and limited-memory projected quasi-Newton methods \cite{schmidt2009optimizing}, which efficiently handle the problem by making use of tailored projection strategies \cite{condat2016fast,duchi2008efficient}, and projection-free methods, like, e.g., Frank-Wolfe variants \cite{bomze2019first,bomze:2019,jaggi2013revisiting,lacoste:2015}, that embed a cheap linear minimization oracle. 

As highlighted before, the main goal when using the $\ell_1$ ball is to get very sparse solutions (i.e., solutions with many zero components). In this context, it hence makes sense to devise strategies that allow to quickly identify the set of zero components in the optimal solution. This would indeed  guarantee a significant speed-up of the optimization process. A number of active\kblue{-}set strategies for structured feasible sets is available in the literature
(see, e.g.,~\cite{bertsekas:1982,birgin2002large,bras:2017,cristofari:2017,cristofari:2020,diserafino:2017,facchinei1998accurate,facchinei1998active,hager:2006,hager:2016a,hager:2016b,more1989algorithms} and references therein), but none of those directly handles the $\ell_1$ ball.

In this paper, inspired by the work carried out in \cite{cristofari:2020}, we propose a tailored active\kblue{-}set strategy for problem \eqref{probl1} and embed it into a  first-order projection-based algorithm. 
At each iteration, the method first sets to zero the variables that are guessed to be zero at the final solution.
This is done by means of the tailored active-set estimate, which aims at identifying the manifold where the solutions of problem \eqref{probl1} lie, while guaranteeing, thanks to a descent property, a reduction of the objective function at each iteration.
Then, the remaining variables, i.e., those variables estimated to be non-zero at the final solution, are suitably modified by means of a non-monotone gradient-projection step.

The paper is organized as follows. In Section \ref{sec:asest}, we describe 
the active-set strategy and analyze the descent property connected to it. We then devise, in Section \ref{sec:alg}, our first-order optimization algorithm and carry out a global convergence analysis. We further report a numerical comparison with some well-known first order methods using two different classes of  $\ell_1$-constrained problems (that is, LASSO and constrained sparse logistic regression) in Section~\ref{sec:results}. Finally, we draw some conclusions in Section~\ref{sec:conc}.

\section{The active-set estimate}\label{sec:asest}
Since the feasible set of problem~\eqref{probl1} 
is convex and can be written as convex combination of the 
vectors $\pm \tau e_i$, $i=1,\ldots,n$, 
we can characterize the stationary points as follows.
\begin{definition}\label{def:stat}
A feasible point $x^*$ of problem~\eqref{probl1} is stationary if and only if
\begin{equation}\label{eq:stat}
\begin{split}
& \nabla\varphi(x^*)^T(\tau e_i - x^*) \geq 0, \quad i = 1,\ldots, n, \\
& \nabla\varphi(x^*)^T(-\tau e_i - x^*) \geq 0, \quad i = 1,\ldots, n.
\end{split}
\end{equation}
\end{definition}

\kblue{In the next proposition, we state some ``complementarity-type'' conditions for stationary points of problem~\eqref{probl1}.
\begin{proposition}\label{prop-staz-l1}
Let $x^*$ be a stationary point of problem~\eqref{probl1}. Then
\begin{enumerate}[label=(\roman*), leftmargin=*]
\item $x_i^*>0 \, \Rightarrow \, \nabla \varphi(x^*)^T (\tau e_i - x^*)=0$,
\item $x_i^*<0 \, \Rightarrow \, \nabla\varphi(x^*)^T (-\tau e_i - x^*)=0$.
\end{enumerate}		
\end{proposition}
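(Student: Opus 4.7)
The plan is to combine the stationarity inequalities with a convex combination representation of $x^*$ and derive all the desired equalities at once from a single sign argument, rather than handling each index $i$ individually.

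First I would exploit the fact that $x^*$ is feasible to write it as a convex combination of the extreme points $\pm\tau e_i$ of the $\ell_1$-ball together with the origin. Setting $\lambda_i := |x_i^*|/\tau$ for $i$ with $x_i^* \neq 0$ and $\lambda_0 := 1 - \|x^*\|_1/\tau \geq 0$, a direct check (splitting into $I^+ = \{i : x_i^* > 0\}$ and $I^- = \{i : x_i^* < 0\}$) gives
\[
x^* = \sum_{i \in I^+} \lambda_i (\tau e_i) + \sum_{i \in I^-} \lambda_i (-\tau e_i) + \lambda_0 \cdot 0,
\qquad \sum_{i \in I^+ \cup I^-}\lambda_i + \lambda_0 = 1.
\]

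Next I would form the weighted sum of the stationarity inequalities from Definition~\ref{def:stat},
\[
S := \sum_{i \in I^+} \lambda_i\,\nabla\varphi(x^*)^T(\tau e_i - x^*) + \sum_{i \in I^-} \lambda_i\,\nabla\varphi(x^*)^T(-\tau e_i - x^*).
\]
Since $\lambda_i \geq 0$ and each inner product is nonnegative by stationarity, $S \geq 0$. On the other hand, using the convex combination identity above, the linear terms telescope to give $S = \lambda_0\,\nabla\varphi(x^*)^T x^*$.

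It remains to show that $\lambda_0\,\nabla\varphi(x^*)^T x^* \leq 0$, which forces $S = 0$ and hence each nonnegative summand to vanish. Since $0$ lies in the $\ell_1$-ball (indeed $0 = \tfrac{1}{2}(\tau e_1) + \tfrac{1}{2}(-\tau e_1)$), averaging the two stationarity inequalities for index $1$ yields $\nabla\varphi(x^*)^T(0 - x^*) \geq 0$, i.e.\ $\nabla\varphi(x^*)^T x^* \leq 0$. Combined with $\lambda_0 \geq 0$, this gives $S \leq 0$, so $S = 0$. Because every term in $S$ is nonnegative and $\lambda_i > 0$ precisely when $x_i^* \neq 0$, the $i$-th stationarity inequality must hold with equality for each $i \in I^+ \cup I^-$, proving (i) and (ii).

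The argument has no real obstacle; the only subtle point is recognizing the ``telescoping'' identity $S = \lambda_0\,\nabla\varphi(x^*)^T x^*$ and noticing that $0 \in$ feasible set supplies, via a convex combination of two vertex inequalities, the sign needed to close the loop without splitting into cases on whether $\|x^*\|_1 = \tau$ or $\|x^*\|_1 < \tau$.
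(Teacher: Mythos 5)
Your proof is correct, and it takes a genuinely different route from the paper's. The paper argues index by index and by contradiction: for $0<|x_i^*|<\tau$ it constructs an explicit feasible descent direction $d^\pm$ proportional to $x^*\mp\tau e_i$ and shows $\|x^*+d^\pm\|_1\le\tau$, which contradicts stationarity (the case $|x_i^*|=\tau$ is handled separately as trivial). You instead prove all the equalities at once with a complementary-slackness argument: writing $x^*=\sum_i\lambda_i(\pm\tau e_i)+\lambda_0\cdot 0$ and summing the stationarity inequalities with weights $\lambda_i$, the sum telescopes to $\lambda_0\,\nabla\varphi(x^*)^Tx^*$, which is nonpositive because averaging the two vertex inequalities for any single index gives $\nabla\varphi(x^*)^Tx^*\le 0$; hence every nonnegative summand with $\lambda_i>0$ vanishes. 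Your version has two advantages: it avoids the case split on $\|x^*\|_1$, and it uses only the vertex inequalities of Definition~\ref{def:stat} as literally stated, whereas the paper's contradiction implicitly invokes the equivalence of that vertex condition with the absence of feasible descent directions (true for a convex feasible set, but left unsaid). The paper's construction, on the other hand, is more self-contained geometrically and makes visible exactly which feasible movement would decrease $\varphi$ if complementarity failed. I see no gap in your argument; the only cosmetic remark is that the averaging step presupposes some index to average over, which is trivially available.
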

\begin{proof}
If $|x_i^*| = \tau$, then $x^* = \tau \, \sign(x^*_i) \, e_i$
and the result trivially holds.
To prove point~(i), now let $0 < x_i^* <\tau$.
Taking into account (\ref{eq:stat}), by contradiction we assume that
\begin{equation}\label{compl1}
\nabla\varphi(x^*)^T (\tau e_i - x^*)>0.
\end{equation}
Let $d^+\in \R^n$ be defined as follows:
\[
d^+=\frac{x_i^*}{\tau-x_i^*}(x^*-\tau e_i).
\]
We have
\begin{equation}\label{compl2}
\begin{split}
\|x^*+d^+\|_1 & =\biggr(1+ \frac{x_i^*}{\tau-x_i^*}\biggl)\sum_{j\neq i}  |x^*_j|+\biggr|x_i^*+ \frac{x_i^*}{\tau-x_i^*}(x_i^*-\tau)\biggl| \\
& \le\biggr(1+ \frac{x_i^*}{\tau-x_i^*}\biggl)(\tau -|x_i^*|)=\tau,
\end{split}
\end{equation}
so that $d^+$ is a feasible direction in $x^*$. 
Therefore,~\eqref{compl1} and~\eqref{compl2} imply that $d^+$ is a feasible descent direction for $\varphi(\cdot)$ in $x^*$. This contradicts the fact that $x^*$ is a stationary point of problem~\eqref{probl1} and point (i) is proved. To prove point~(ii), we can use the same arguments as above,
considering $-\tau < x_i^* < 0$ and,
assuming by contradiction that $\nabla\varphi(x^*)^T (-\tau e_i - x^*)>0$,
we obtain that
\[
d^-=\frac{|x_i^*|}{\tau+x_i^*}(x^*+\tau e_i)
\]
is such that $\|x^*+d^-\|_1 \le \tau$, that is,
$d^-$ is a feasible and descent direction for $\varphi(\cdot)$ in $x^*$,
leading to a contradiction.
\end{proof}
}

With little abuse of standard terminology, given a stationary point $x^*$ we say that a variable $x^*_i$ is \textit{active} if $x^*_i=0$,
whereas a variable $x^*_i$ is said to be \textit{non-active} if $x^*_i \ne 0$.
We can thus define the \textit{active set} $\bar A_{\ell_1}(x^*)$ and the \textit{non-active set} $\bar N_{\ell_1}(x^*)$ as follows:
\[
\bar A_{\ell_1}(x^*) = \{i \colon x^*_i=0\}, \quad \bar N_{\ell_1}(x^*) = \{1,\ldots,n\} \setminus \bar A_{\ell_1}(x^*).
\]

Now, we show how we estimate these sets starting from any feasible point $x$ of problem \eqref{probl1}.
%
%
In order to obtain such an estimate we first need to suitably reformulate our problem~\eqref{probl1} by introducing a dummy variable $z$.
Let $\bar \varphi(x,z) \colon \R^{n+1} \to \R$ be the function defined as $\bar \varphi(x,z) = \varphi(x)$ for all $(x,z)$.
Problem~\eqref{probl1} can then be rewritten as
\begin{equation}\label{probl1_z}
\begin{split}
& \min \, \bar \varphi(x,z) \\
& \|x\|_1 +z\leq \tau,\\
& z \geq 0.
\end{split}
\end{equation}

Every feasible point of problem~\eqref{probl1_z} can be expressed as convex combination of
$\{\pm \tau e_1,\ldots, \pm \tau e_n, \tau e_{n+1}\} \subset \R^{n+1}$.
Therefore, we can define the following matrix, where $I$ denotes the $n \times n$ identity matrix:
\[
\bar M = \tau
\left[\scriptsize
\begin{array}{c | c | c}
\mbox{\large{$I$}} & \mbox{\large{$-I$}} & \begin{matrix}0\\ \vdots\\ 0\end{matrix} \\
\hline
\begin{matrix}0 &\ldots & 0\end{matrix} & \begin{matrix}0 &\ldots & 0\end{matrix} &1
\end{array}
\right] \in \R^{(n+1) \times (2n+1)},
\]
and we obtain the following reformulation of~\eqref{probl1}
as a minimization problem over the unit simplex:
\begin{equation}\label{probl1_eq}
\begin{split}
& \min \, f(y) = \bar \varphi(\bar M y) \\
& e^T y  = 1, \\
& y \ge 0.
\end{split}
\end{equation}
Note that, given any feasible point $x$ of problem~\eqref{probl1}, we can compute a feasible point $y$ of problem~\eqref{probl1_eq} such that
\begin{equation}\label{y_x}
\begin{split}
& y_i = \frac 1 {\tau} \max\{0,x_i\}, \quad i = 1,\dots,n, \\
& y_{n+i} = \frac 1 {\tau} \max\{0,-x_i\}, \quad i = 1,\dots,n, \\
& y_{2n+1} = \frac {\tau-\|x\|_1} {\tau}.
\end{split}
\end{equation}

The rationale behind our approach is sketched in the three following points:
\begin{enumerate}[label=(\roman*), leftmargin=*]
%
\item    
    For any feasible point $x$ of problem~\eqref{probl1},
    by~\eqref{y_x} we can compute a feasible point $y$ of problem~\eqref{probl1_eq} such that
    \begin{equation}\label{y0}
    y_i = 0 \, \Leftrightarrow \, x_i \le 0 \quad \text{and} \quad y_{n+i} = 0 \, \Leftrightarrow \, x_i \ge 0, \qquad i = 1,\ldots,n.
    \end{equation}

\item According to~\eqref{y0}, for every feasible point $x$ of problem~\eqref{probl1} we have that
    \begin{equation}\label{x0}
    x_i = 0 \, \Leftrightarrow \, y_i = y_{n+i} = 0, \quad i = 1,\ldots,n.
    \end{equation}
    Thus, it is natural to estimate a variable $x_i$ as active at $x^*$
    if both $y_i$ and $y_{n+i}$ are estimated to be zero at the point corresponding to $x^*$ in the $y$ space.
    To estimate the zero variables among $y_1,\ldots,y_{2n+1}$ we use the
    active-set estimate described in~\cite{cristofari:2020}, specifically devised for minimization problems over the unit simplex.
\item Then, we are able to go back in the original $x$ space to obtain an active-set estimate of problem~\eqref{probl1} without explicitly considering the
     variables $y_1,\ldots,y_{2n+1}$ of the reformulated problem.
\end{enumerate}

\begin{remark}
 The introduction of the dummy variable $z$ is needed in order to get a reformulation of problem~\eqref{probl1} \kblue{as a minimization problem over the unit simplex} satisfying~\eqref{y0}.
 Since every feasible point $x$ of problem~\eqref{probl1} can be expressed as \kblue{a} convex combination of the vertices of the polyhedron $\{x\in \R^n:\; \|x\|_1 \leq \kblue{\tau}\}$,
 a straightforward reformulation of problem~\eqref{probl1} would \kblue{then} be the following:
 \begin{equation}\label{ref_st}
 \min \{\varphi(My) \colon e^T y = 1, \,  y \ge 0\},
 \end{equation}
 with $M = \tau \,\begin{bmatrix} & I & \vline & \Large{-I} & \end{bmatrix}$.
 However, this reformulation does not work for our purposes, as there exist feasible points $x$ of problem~\eqref{probl1} for which no $y$ feasible for problem \eqref{ref_st}  satisfying~\eqref{y0} can be found.
 In particular, if $x$ is in the interior of the $\ell_1$-ball (e.g., the origin), we cannot find any $y$ feasible for problem \eqref{ref_st} such that~\eqref{y0} holds.
\end{remark}

Considering problem~\eqref{probl1_eq}
 and using the active-set estimate proposed in~\cite{cristofari:2020} for minimization problems over the unit simplex,
given any feasible point $y$ of problem~\eqref{probl1_eq} we define:
\begin{gather}
A(y) = \{i \colon y_i \le \epsilon \nabla f(y)^T(e_i - y)\}, \label{as1} \\
N(y) = \{i \colon y_i > \epsilon \nabla f(y)^T(e_i - y)\},  \label{nas1}
\end{gather}
where $\epsilon$ is a positive parameter.
$A(y)$ contains the indices of the variables that are estimated to be zero at a certain stationary point
and $N(y)$ contains the indices of the variables that are estimated to be positive at the same stationary point
(see~\cite{cristofari:2020} for details of how these formulas are obtained).
As mentioned above, taking into account~\eqref{x0}, we estimate a variable $x_i$ as active for problem~\eqref{probl1}
if both $y_i$ and $y_{n+1}$ are estimated to be zero.
Namely,
\begin{subequations}
\begin{gather}
A_{\ell_1}(x) = \bigl\{i \in \{1,\ldots,n\} \colon i \in A(y) \, \text{ and } \, (n+i) \in A(y) \bigr\}, \\
N_{\ell_1}(x) = \bigl\{i \in \{1,\ldots,n\} \colon i \in N(y) \, \text{ or } \, (n+i) \in N(y) \bigr\}. \label{N_l1}
\end{gather}
\end{subequations}

Now we show how $A_{\ell_1}(x)$ and $N_{\ell_1}(x)$ can be expressed without explicitly considering the
variables $y$ and the objective function $f(y)$ of the reformulated problem.
This allows us to work in the original $x$ space, avoiding to double the number of variables in practice.

To obtain the desired relations, first observe that
\begin{equation}\label{grad_f}
\nabla f(y) = \bar M^T \nabla \bar \varphi (x)
= \tau \begin{bmatrix} \nabla \varphi(x) \\ -\nabla \varphi(x) \\ 0 \end{bmatrix}^T,
\end{equation}
and
\[
\nabla f(y)^T y = \nabla \bar \varphi(x)^T \bar M y =
\begin{bmatrix}
\nabla \varphi(x)^T & 0
\end{bmatrix} \bar M y = \nabla \varphi(x)^T x.
\]
Let us distinguish two cases:
\begin{enumerate}[label=(\roman*), leftmargin=*]
\item $x_i \ge 0$.
    Recalling~\eqref{as1}--\eqref{nas1}, we have that $i \in A(y)$ if and only if
    \begin{equation}\label{y_act_x_pos_1}
    \begin{split}
     0 \le \frac 1 \tau x_i =y_i & \le \epsilon \nabla f(y)^T(e_i - y) = \epsilon (\nabla_i f(y) - \nabla f(y)^T y) \\
                                    & = \epsilon (\tau \nabla_i \varphi(x) - \nabla \varphi(x)^Tx) = \epsilon \nabla \varphi(x)^T (\tau e_i - x)
    \end{split}
    \end{equation}
    and $(n+i) \in A(y)$ if and only if
    \begin{equation}\label{y_act_x_pos_2}
    \begin{split}
     -\frac 1 {\tau} x_i \le 0 = y_{n+i} & \le \epsilon \nabla f(y)^T(e_{n+i} - y) = \epsilon (\nabla_{n+i} f(y) - \nabla f(y)^T y)\\
                                           & = \epsilon (-\tau \nabla_i \varphi(x) - \nabla \varphi(x)^Tx) = -\epsilon \nabla \varphi(x)^T (\tau e_i + x).
    \end{split}
    \end{equation}
\item $x_i < 0$.
    Similarly to the previous case, we have that $i \in A(y)$ if and only if
    \begin{equation}\label{y_act_x_neg_1}
    \begin{split}
    \frac 1 {\tau} x_i < 0 = y_i & \le \epsilon \nabla f(y)^T(e_i - y) = \epsilon (\nabla_i f(y) - \nabla f(y)^T y) \\
                                   & = \epsilon (\tau \nabla_i \varphi(x) - \nabla \varphi(x)^Tx) = \epsilon \nabla \varphi(x)^T (\tau e_i - x)
    \end{split}
    \end{equation}
    and $(n+i) \in A(y)$ if and only if
    \begin{equation}\label{y_act_x_neg_2}
    \begin{split}
    0 < -\frac 1 \tau x_i = y_{n+i} & \le \epsilon \nabla f(y)^T(e_{n+i} - y) = \epsilon (\nabla_{n+i} f(y) - \nabla f(y)^T y)\\
                                      & = \epsilon (-\tau \nabla_i \varphi(x) - \nabla \varphi(x)^Tx) = -\epsilon \nabla \varphi(x)^T (\tau e_i + x).
    \end{split}
    \end{equation}
\end{enumerate}
From~\eqref{y_act_x_pos_1}, \eqref{y_act_x_pos_2}, \eqref{y_act_x_neg_1} and~\eqref{y_act_x_neg_2},
we thus obtain
\begin{gather}
 \begin{split}
A_{\ell_1}(x) = \{i \colon &\epsilon\, \tau \nabla \varphi(x)^T (\tau e_i + x) \le 0 \le  x_i \le \epsilon\, \tau \nabla \varphi(x)^T (\tau e_i - x)\ \mbox{or} \\
  &\epsilon\, \tau \nabla \varphi(x)^T (\tau e_i + x) \le x_i \le 0 \le \epsilon\, \tau \nabla \varphi(x)^T (\tau e_i - x)\}, \label{act_set_l1}
\end{split} \\
N_{\ell_1}(x) = \{1,\ldots,n\} \setminus A_{\ell_1}(x). \label{non_act_set_l1}
\end{gather}
Let us highlight again that $A_{\ell_1}(x)$ and $N_{\ell_1}(x)$ do not depend on the
variables $y$ and on the objective function $f(y)$ of the reformulated problem,
so no variable transformation is needed in practice to estimate the active set of problem~\eqref{probl1}.
\kblue{
In the following, we prove that under specific assumptions, $\bar A_{\ell_1}(x^*)$ is detected by our active-set estimate, when evaluated in points sufficiently close to a stationary point $x^*$.
\begin{proposition}\label{prop0_Stime-l1}
If $x^*$ is a stationary point of problem~\eqref{probl1}, then there exists
an open ball $\mathcal B(x^*,\rho)$ with center $x^*$ and radius $\rho > 0$
such that, for all $x \in \mathcal B (x^*,\rho)$, we have
\begin{gather}
A_{\ell_1}(x) \subseteq \bar A_{\ell_1} (x^*), \label{stima-loc1} \\
\bar N_{\ell_1}(x^*) \subseteq N_{\ell_1} (x). \label{stima-loc2}
\end{gather}
Furthermore, if the following ``strict-complementarity-type'' assumption holds:
\begin{equation}\label{stima-ass}
x_i^*=0 \, \Rightarrow
\nabla \varphi(x^*)^T (\tau e_i - x^*)>0 \, \land \, \nabla \varphi(x^*)^T (\tau e_i + x^*)<0, 
\end{equation}
then, for all $x \in \mathcal B (x^*,\rho)$, we have
\begin{gather}
A_{\ell_1}(x) = \bar A_{\ell_1} (x^*), \label{stima-loc3} \\
\bar N_{\ell_1}(x^*) = N_{\ell_1} (x). \label{stima-loc4}
\end{gather}
\end{proposition}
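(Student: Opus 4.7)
The plan is to unify the two case-clauses in the definition \eqref{act_set_l1} by introducing the continuous functions $\alpha_i(x) := \epsilon\tau\, \nabla\varphi(x)^T(\tau e_i - x)$ and $\beta_i(x) := \epsilon\tau\, \nabla\varphi(x)^T(\tau e_i + x)$, and observing that the two disjuncts collapse to the joint requirement $\beta_i(x) \le 0 \le \alpha_i(x)$ together with $\beta_i(x) \le x_i \le \alpha_i(x)$. Because $\nabla\varphi$ is continuous, both $\alpha_i(\cdot)$ and $\beta_i(\cdot)$ are continuous functions of $x$, which is the only analytic ingredient I will use. Throughout, the final radius $\rho$ will be chosen as the minimum of finitely many index-wise radii, so uniform shrinking is harmless.

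To establish \eqref{stima-loc1} I would argue by contrapositive: fix $i \notin \bar A_{\ell_1}(x^*)$ and show $i \notin A_{\ell_1}(x)$ for $x$ in a sufficiently small ball around $x^*$. If $x_i^*>0$, then Proposition~\ref{prop-staz-l1}(i) gives $\alpha_i(x^*)=0$, while $x_i$ remains bounded away from $0$; shrinking $\rho$ then forces $\alpha_i(x)<x_i$, which violates the upper bound needed to lie in $A_{\ell_1}(x)$. The case $x_i^*<0$ is symmetric: Proposition~\ref{prop-staz-l1}(ii) yields $\beta_i(x^*)=0$, and continuity drives $x_i<\beta_i(x)$ in a small enough ball. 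The inclusion \eqref{stima-loc2} then follows immediately by taking complements in $\{1,\ldots,n\}$, using definitions \eqref{act_set_l1}--\eqref{non_act_set_l1}.

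For \eqref{stima-loc3}, only the reverse inclusion $\bar A_{\ell_1}(x^*) \subseteq A_{\ell_1}(x)$ is new. Pick $i$ with $x_i^*=0$; assumption \eqref{stima-ass} reads exactly $\alpha_i(x^*)>0$ and $\beta_i(x^*)<0$, so continuity produces an open ball on which these strict inequalities persist and on which, simultaneously, $x_i$ stays close enough to zero that $\beta_i(x)<x_i<\alpha_i(x)$. This places $i$ in $A_{\ell_1}(x)$, after which \eqref{stima-loc4} follows by complementation.

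The only subtle point I anticipate is making the neighborhood uniform in $i$: each index supplies its own admissible radius, coming from the moduli of continuity of $\alpha_i$ and $\beta_i$ near $x^*$ together with the distance of $x_i^*$ from $0$ (and, in the second part, the size of the strict-complementarity gap). Taking $\rho$ to be the minimum of these finitely many radii closes the argument, and no quantitative estimate of $\rho$ in terms of the problem data is needed.
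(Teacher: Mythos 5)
Your proposal is correct and follows essentially the same route as the paper's proof: both arguments rest on Proposition~\ref{prop-staz-l1} to get $\nabla\varphi(x^*)^T(\tau e_i - x^*)=0$ (resp.\ $\nabla\varphi(x^*)^T(-\tau e_i - x^*)=0$) for indices with $x_i^*\neq 0$, then use continuity of $\nabla\varphi$ against the definitions \eqref{act_set_l1}--\eqref{non_act_set_l1} to exclude such indices from $A_{\ell_1}(x)$ near $x^*$, and invoke \eqref{stima-ass} plus continuity for the reverse inclusion. Your write-up is in fact somewhat more explicit than the paper's (the collapsed form $\beta_i(x)\le 0\le\alpha_i(x)$, $\beta_i(x)\le x_i\le\alpha_i(x)$ and the uniform choice of $\rho$ over finitely many indices are correct and left implicit in the paper), but the underlying argument is the same.
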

\begin{proof}
Let $i\in N_{\ell_1} (x^*) $, then $|x_i^*|>0$.
Proposition~\ref{prop-staz-l1} implies that either 
\[
\nabla\varphi(x^*)^T (\tau e_i - x^*)=0 \quad \text{if} \quad x_i^*>0,
\]
or 
\[
\nabla\varphi(x^*)^T (-\tau e_i - x^*)=0 \quad \text{if} \quad x_i^*<0.
\]
Then, the continuity of $\nabla \varphi$ and the definition of 
$N_{\ell_1} (x)$ imply that there exists 
an open ball $\mathcal B(x^*,\rho)$ with center $x^*$ and radius $\rho > 0$ 
such that, for all $x\in \mathcal B(x^*,\rho)$, we have that $i \in N_{\ell_1} (x)$. This proves~\eqref{stima-loc2} and, consequently, also~\eqref{stima-loc1}.
If~\eqref{stima-ass} holds, the definition of $ N_{\ell_1} (x)$ and the continuity of $\nabla \varphi$ ensures that
$\bar A_{\ell_1} (x^*) \subseteq A_{\ell_1}(x)$ for all $x\in \mathcal B(x^*,\rho)$, implying that~\eqref{stima-loc3} and~\eqref{stima-loc4} hold.
\end{proof}
}


\subsection{Descent property}
So far, we have obtained the active and non-active set estimates~\eqref{act_set_l1}--\eqref{non_act_set_l1}
passing through a variable transformation which allowed us to adapt the active and non\kblue{-}active set estimates proposed
in~\cite{cristofari:2020} to our problem~\eqref{probl1}.

In~\cite{cristofari:2020}, the active and non-active set estimates, designed for minimization problems over the unit simplex, guarantee a decrease in the objective function when setting (some of) the estimated active variables to zero and
moving a suitable estimated non-active variable (in order to maintain feasibility).

In the following, we show that the same property holds for problem~\eqref{probl1} using the active and non-active set
estimates~\eqref{act_set_l1}--\eqref{non_act_set_l1}.
To this aim, in the next proposition we first introduce the index set $J_{\ell_1}(x)$ and \kblue{relate} it with $N_{\ell_1}(x)$.
\begin{proposition}\label{prop0_l1}
Let $x \in \R^n$ be a feasible non-stationary point of problem~\eqref{probl1} and define
\[
J_{\ell_1}(x) = \Bigl\{j \colon j \in \argmax_{i=1,\ldots,n}\, \bigl\{|\nabla_i \varphi(x)|\bigr\} \Bigr\}.
\]
Then,
$J_{\ell_1}(x) \subseteq N_{\ell_1}(x)$.
\end{proposition}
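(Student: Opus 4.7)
The plan is to argue by contradiction: suppose some $j \in J_{\ell_1}(x)$ also belongs to $A_{\ell_1}(x)$, and then derive a relation that is incompatible with $x$ being non-stationary. The point is that the conditions entering the definition of $A_{\ell_1}(x)$ in~\eqref{act_set_l1} force $|\nabla_j\varphi(x)|$ to be comparable to $|\nabla\varphi(x)^Tx|/\tau$, while the fact that $j$ attains the maximum absolute gradient forces the reverse inequality; the two bounds together will collapse to an equality that directly contradicts non-stationarity.

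First I would unpack~\eqref{act_set_l1}. Both alternatives in the definition of $A_{\ell_1}(x)$ imply, regardless of the sign of $x_j$, that
\[
\nabla\varphi(x)^T(\tau e_j+x)\le 0\quad\text{and}\quad \nabla\varphi(x)^T(\tau e_j - x)\ge 0,
\]
which rearranges to $\nabla\varphi(x)^Tx \le \tau\nabla_j\varphi(x)\le -\nabla\varphi(x)^Tx$. Setting $s=\nabla\varphi(x)^Tx$ and $g_j=\nabla_j\varphi(x)$, this yields $s\le 0$ together with $\tau|g_j|\le |s|$.

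Next I would pair this with the standard Hölder-type bound
\[
|s|=\Bigl|\sum_i\nabla_i\varphi(x)\,x_i\Bigr|\le \Bigl(\max_i|\nabla_i\varphi(x)|\Bigr)\|x\|_1 = |g_j|\,\|x\|_1 \le \tau|g_j|,
\]
where the equality uses $j\in J_{\ell_1}(x)$ and the last inequality is feasibility. Together with the previous bound this forces $\tau|g_j|=|s|=-s$, i.e. $\tau|g_j|+\nabla\varphi(x)^Tx=0$.

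Finally I would invoke Definition~\ref{def:stat}: non-stationarity of $x$ means that some $\nabla\varphi(x)^T(\pm\tau e_i-x)$ is strictly negative, which (minimizing the linear form $\nabla\varphi(x)^Tv$ over the $\ell_1$-ball, attained at $v=-\tau\sign(g_j)e_j$) is equivalent to the strict inequality $\tau|g_j|+\nabla\varphi(x)^Tx>0$. This contradicts the equality just obtained, so $j\notin A_{\ell_1}(x)$ and hence $j\in N_{\ell_1}(x)$. I do not expect a serious obstacle here; the only slightly delicate step is verifying that \emph{both} branches in the ``or'' of~\eqref{act_set_l1} yield the same gradient inequalities independently of the sign of $x_j$, but this is immediate from inspection.
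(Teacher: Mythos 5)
Your proof is correct, but it takes a genuinely different route from the paper's. The paper passes back through the simplex reformulation~\eqref{probl1_eq}: it maps $x$ to $y$ via~\eqref{y_x}, invokes Proposition~1 of the reference [cristofari:2020] to obtain an index $\nu\in\argmin_i\{\nabla_i f(y)\}$ lying in $N(y)$, and then translates $\nu$ back to an index $j$ that lies in both $J_{\ell_1}(x)$ and $N_{\ell_1}(x)$ via~\eqref{N_l1}. You instead work entirely in the original space: you extract from both branches of~\eqref{act_set_l1} the sign-independent inequalities $\nabla\varphi(x)^T(\tau e_j+x)\le 0$ and $\nabla\varphi(x)^T(\tau e_j-x)\ge 0$, combine them with the H\"older bound $|\nabla\varphi(x)^Tx|\le\tau|\nabla_j\varphi(x)|$ (valid precisely because $j$ maximizes $|\nabla_i\varphi(x)|$ and $\|x\|_1\le\tau$), and collapse everything to $\tau|\nabla_j\varphi(x)|+\nabla\varphi(x)^Tx=0$, which is exactly the stationarity condition of Definition~\ref{def:stat} written in min-over-vertices form. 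Your argument buys two things: it is self-contained (no appeal to the external Proposition~1 on the simplex estimate, and no need for the $y$-variables or~\eqref{grad_f}), and it establishes the stated inclusion for \emph{every} $j\in J_{\ell_1}(x)$ uniformly, whereas the paper's proof as written exhibits a single index $\nu$ and its image $j$. The paper's route, on the other hand, makes transparent how the result is inherited from the simplex machinery that motivates the whole estimate. All the individual steps you flag as potentially delicate (the two branches of the ``or'', and the equivalence of non-stationarity with $\tau\max_i|\nabla_i\varphi(x)|+\nabla\varphi(x)^Tx>0$) check out.
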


\begin{proof}
Let $y$ be the point given by~\eqref{y_x} and consider the reformulated problem~\eqref{probl1_eq}.
Let $A(y)$ and $N(y)$ be the index sets given in~\eqref{as1}--\eqref{nas1},
that is, the active and non-active set estimates for problem~\eqref{probl1_eq}, respectively.

From the expression of $\nabla f(y)$ given in~\eqref{grad_f}, and exploiting the hypothesis that $x$ is non-stationary
(implying that $\nabla \varphi(x) \ne 0)$, it follows that
\begin{equation}\label{min_g_f}
\min_{i=1,\ldots,2n+1} \{\nabla_i f(y)\} < 0.
\end{equation}
Since $\nabla_{2n+1} f(y) = 0$ (again from~\eqref{grad_f}), it follows that
\[
(2n+1) \notin \argmin_{i=1,\ldots,2n+1} \{\nabla_i f(y)\}.
\]
From Proposition~1 in~\cite{cristofari:2020}, there exists $\nu \in \{1,\ldots,2n\}$ such that
\begin{gather}
\nu \in \displaystyle \argmin_{i=1,\ldots,2n} \{\nabla_i f(y)\}, \label{nu_l1_argmin} \\
\nu \in N(y). \label{nu_l1_in_N}
\end{gather}
In particular, we can rewrite~\eqref{nu_l1_argmin} as
\[
\nabla_{\nu} f(y) = \tau \min_{i=1,\ldots,n} \{\nabla_1 \varphi(x), \ldots, \nabla_n \varphi(x), -\nabla_1 \varphi(x), \ldots, -\nabla_n \varphi(x)\}.
\]
Taking into account~\eqref{min_g_f}, we obtain
\begin{equation}\label{nabla_nu}
-|\nabla_{\nu} f(y)| \le -\tau \abs{\nabla_i \varphi(x)}, \quad \forall i = 1, \ldots,n.
\end{equation}
Now, let $j \in \{1,\ldots,n\}$ be the following index:
\begin{equation}\label{nu_l1}
j =
\begin{cases}
\nu, \quad & \text{ if } \nu \in \{1,\ldots,n\}, \\
\nu-n, \quad & \text{ if } \nu \in \{n+1,\ldots,2n\}.
\end{cases}
\end{equation}
Using again~\eqref{grad_f}, we get $\abs{\nabla_{\nu} f(y)} = \abs{\nabla_j f(y)} = \tau \abs{{\nabla_j \varphi(x)}}$.
This, combined with~\eqref{nabla_nu}, implies that
\[
j \in \argmax_{i=1,\ldots,n}\, \bigl\{\abs{\nabla_i \varphi(x)}\bigr\}.
\]
Finally, using~\eqref{nu_l1_in_N} and~\eqref{nu_l1}, it follows that at least one index between $j$ and $(n+j)$ belongs to $N(y)$.
Therefore, from~\eqref{N_l1} we have that $j \in N_{\ell_1}(x)$ and the assertion is proved. 
\end{proof}

Now, we need an assumption on the parameter $\epsilon$ appearing in~\eqref{act_set_l1}--\eqref{non_act_set_l1}.
It will allow us to prove the subsequent proposition, stating
that $\varphi(x)$ decreases if we set the variables in $A_{\ell_1}(x)$ to zero and suitably move a variable in $J_{\ell_1}(x)$.

\begin{assumption}\label{ass:eps_l1}
Assume that the parameter $\eps$ appearing in the estimates~\eqref{act_set_l1}--\eqref{non_act_set_l1} satisfies the following conditions:
\[
0 < \epsilon \le \frac 1 {\tau^2 n L (2C+1)},
\]
where $C>0$ is a given constant.
\end{assumption}

\begin{proposition}\label{prop1_l1}
Let Assumption~\ref{ass:eps_l1} hold.
Given a feasible non-stationary point $x$ of problem~\eqref{probl1}, let $j \in J_{\ell_1}(x)$ and $I = \{1,\ldots,n\} \setminus \{j\}$.
Let $\hat A_{\ell_1}(x)$ be a set of indices such that
$\hat A_{\ell_1}(x) \subseteq A_{\ell_1}(x)$.
Let $\tilde x$ be the feasible point defined as follows:
\[
\tilde x_{\hat A_{\ell_1}(x)} = 0; \quad \tilde x_{I\setminus \hat A_{\ell_1}(x)} = x_{I\setminus \hat A_{\ell_1}(x)};
\quad \tilde x_j = x_j -\sign(\nabla_j \varphi(x))\displaystyle{\sum_{h \in \hat A_{\ell_1}(x)} \abs{x_h}}.
\]
Then,
\[
\varphi(\tilde x)-\varphi(x)\le - CL \norm{\tilde x-x}^2,
\]
where $C > 0$ is the constant appearing in Assumption~\ref{ass:eps_l1}.
\end{proposition}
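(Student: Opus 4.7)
The plan is to combine the descent lemma with the active-set inequalities defining $A_{\ell_1}(x)$ in \eqref{act_set_l1}, exploiting the fact that $j$ maximizes $|\nabla_i \varphi|$ to cancel the first-order term along the moved coordinate. First I would check that $\tilde x$ is feasible: a direct computation gives $\|\tilde x\|_1 \le \sum_{i \in I \setminus \hat A_{\ell_1}(x)} |x_i| + |x_j| + \sum_{h \in \hat A_{\ell_1}(x)} |x_h| = \|x\|_1 \le \tau$ by the triangle inequality applied to $\tilde x_j$. Then I would apply the standard Lipschitz descent lemma $\varphi(\tilde x) - \varphi(x) \le \nabla\varphi(x)^T(\tilde x - x) + \tfrac{L}{2}\|\tilde x - x\|^2$, and split the inner product as $-\sum_{h \in \hat A_{\ell_1}(x)} \nabla_h \varphi(x) x_h - |\nabla_j \varphi(x)| S$, where $S = \sum_{h \in \hat A_{\ell_1}(x)} |x_h|$.

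The crux of the argument, and the part I expect to be the main obstacle, is to turn the active-set inequality into a useful quadratic bound on $-\nabla_h\varphi(x) x_h$ for $h \in \hat A_{\ell_1}(x)$. Doing a careful sign-case analysis on $x_h$ using \eqref{act_set_l1}: for $x_h \ge 0$ one has $x_h \le \eps \tau(\tau \nabla_h\varphi(x) - \nabla\varphi(x)^T x)$, and for $x_h \le 0$ one has $\epsilon \tau(\tau\nabla_h\varphi(x) + \nabla\varphi(x)^T x) \le x_h$. In both cases, after rearranging and multiplying by $\mp x_h$ (with the sign chosen to preserve the inequality), the two cases collapse to the single clean bound
\[
-\nabla_h \varphi(x)\, x_h \le -\frac{|x_h|^2}{\eps\,\tau^2} - \frac{|x_h|\,\nabla\varphi(x)^T x}{\tau}.
\]
The delicate point is that the sign of $\nabla\varphi(x)^T x$ is not a priori useful, but the formula is uniform in the sign of $x_h$, which is what makes the next step go through.

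Summing over $h \in \hat A_{\ell_1}(x)$ and adding the contribution $-|\nabla_j \varphi(x)|\, S$ from the $j$-th coordinate gives
\[
\nabla\varphi(x)^T(\tilde x - x) \le -\frac{1}{\eps\,\tau^2}\sum_{h \in \hat A_{\ell_1}(x)} |x_h|^2 - \frac{S}{\tau}\bigl(\nabla\varphi(x)^T x + \tau |\nabla_j \varphi(x)|\bigr).
\]
Because $j \in J_{\ell_1}(x)$ means $|\nabla_j\varphi(x)| = \|\nabla\varphi(x)\|_\infty$, Hölder's inequality with $\|x\|_1 \le \tau$ gives $\nabla\varphi(x)^T x \ge -\tau |\nabla_j\varphi(x)|$, so the second bracket is nonnegative and the whole second term is nonpositive. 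We are left with $\nabla\varphi(x)^T(\tilde x - x) \le -\eps^{-1}\tau^{-2} \sum_{h \in \hat A_{\ell_1}(x)} |x_h|^2$.

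The final step is to compare $\sum_{h \in \hat A_{\ell_1}(x)} |x_h|^2$ with $\|\tilde x - x\|^2 = \sum_{h \in \hat A_{\ell_1}(x)} |x_h|^2 + S^2$. By Cauchy--Schwarz $S^2 \le |\hat A_{\ell_1}(x)|\sum |x_h|^2$, and crucially $j \notin \hat A_{\ell_1}(x)$: indeed Proposition~\ref{prop0_l1} gives $j \in N_{\ell_1}(x)$, which is disjoint from $A_{\ell_1}(x) \supseteq \hat A_{\ell_1}(x)$. Hence $|\hat A_{\ell_1}(x)| \le n-1$ and $\|\tilde x - x\|^2 \le n \sum_{h \in \hat A_{\ell_1}(x)} |x_h|^2$. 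Plugging back yields $\nabla\varphi(x)^T(\tilde x - x) \le -\|\tilde x - x\|^2/(\eps\,\tau^2\,n)$, and combining with the descent lemma gives $\varphi(\tilde x) - \varphi(x) \le \bigl(-\tfrac{1}{\eps\tau^2 n} + \tfrac{L}{2}\bigr)\|\tilde x - x\|^2$. Assumption~\ref{ass:eps_l1} then ensures $-\tfrac{1}{\eps\tau^2 n} + \tfrac{L}{2} \le -L(2C+1) + \tfrac{L}{2} \le -CL$, which closes the argument.
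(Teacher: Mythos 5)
Your proof is correct and follows essentially the same route as the paper's: the descent lemma, the componentwise use of the inequalities in \eqref{act_set_l1}, the H\"older bound $\nabla\varphi(x)^T x \ge -\tau\,\abs{\nabla_j\varphi(x)}$ coming from $j\in J_{\ell_1}(x)$, and the cardinality bound $\abs{\hat A_{\ell_1}(x)}\le n-1$ via Proposition~\ref{prop0_l1}. The only difference is bookkeeping: you bound the linear term from above by $-\norm{\tilde x - x}^2/(\eps\tau^2 n)$ directly, whereas the paper bounds $\norm{\tilde x - x}^2$ from above by $\eps\tau^2(\abs{\hat A^+}+1)$ times the (nonnegative) negation of the linear term --- the same chain of inequalities read in the opposite direction.
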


\begin{proof}
Define
\begin{equation}\label{eq:hatAp}
\hat A^+ = \hat A_{\ell_1}(x) \cap \{i \colon x_i \ne 0\}.
\end{equation}
Since $\nabla \varphi$ is Lipschitz continuous with constant $L$, from known results (see, e.g.,~\cite{nesterov:2013})
we can write
\[
\begin{split}
\varphi(\tilde x) & \le \varphi(x) + \nabla \varphi(x)^T (\tilde x-x) + \frac L2 \norm{\tilde x-x}^2 \\
                  & = \varphi(x) + \nabla \varphi(x)^T (\tilde x-x) + \frac{L(2C+1)}2 \norm{\tilde x-x}^2 - CL\norm{\tilde x-x}^2
\end{split}
\]
and then, in order to prove the proposition, what we have to show is that
\begin{equation}\label{eqThesis_l1}
\nabla \varphi(x)^T (\tilde x-x) + \frac{L(2C+1)}2 \norm{\tilde x-x}^2 \le 0.
\end{equation}
From the definition of $\tilde x$, we have that
\begin{equation}\label{normdiffl1}
\begin{split}
\norm{\tilde x-x}^2 & = \sum_{i\in \hat A^+} x_i^2 + \Bigg(\sum\limits_{i\in \hat A^+} \kblue{|x_i|} \Bigg)^2 \le \sum_{i\in \hat A^+} x_i^2 + \abs{\hat A^+}\sum_{i\in \hat A^+} x_i^2 \\
& = (\abs{\hat A^+}+1) \sum_{i\in \hat A^+} x_i^2.
\end{split}
\end{equation}
Furthermore,
\begin{equation}\label{ineqGrad_l1}
\begin{split}
\nabla \varphi(x)^T (\tilde x-x) & = -\sum_{i \in \hat A^+} \nabla_i \varphi(x) x_i - \abs{\nabla_j \varphi(x)} \sum_{i \in \hat A^+} |x_i| \\
& = \sum_{i \in \hat A^+} |x_i| (-\nabla_i \varphi(x) \, \sign(x_i) - \abs{\nabla_j \varphi(x)}).
\end{split}
\end{equation}
Since $j \in J_{\ell_1}(x)$, from the definition of $J_{\ell_1}(x)$
it follows that $-\abs{\nabla_i \varphi(x)} \ge -\abs{\nabla_j \varphi(x)}$ for all $i \in \{1,\ldots,n\}$.
Therefore, we can write
\begin{equation}\label{ineqSimp_l1}
\begin{split}
\nabla \varphi(x)^T x & =  \sum_{i=1}^n \nabla_i \varphi(x) \sign(x_i)\, |x_i| \ge \sum_{i=1}^n -\abs{\nabla_j \varphi(x)} \,|x_i| \\
 & = -\abs{\nabla_j \varphi(x)} \,\|x\|_1 \ge -\abs{\nabla_j \varphi(x)}\, \tau.
\end{split}
\end{equation}
Using~\eqref{act_set_l1} and~\eqref{ineqSimp_l1}, for all $i\in \hat A^+$ we have that
\begin{align*}
 x_i & \le  \epsilon \tau (\nabla_i \varphi(x) \tau - \nabla \varphi(x)^T x) \le \epsilon \tau^2  (\nabla_i \varphi(x) + \abs{\nabla_j \varphi(x)}), \\
-x_i & \le -\epsilon \tau (\nabla_i \varphi(x) \tau + \nabla \varphi(x)^T x) \le \epsilon \tau^2 (-\nabla_i \varphi(x) + \abs{\nabla_j \varphi(x)}),
\end{align*}
and then,
\[
\abs{x_i} = \sign(x_i) \, x_i \le \epsilon \tau^2 (\nabla_i \varphi(x) \, \sign(x_i) + \abs{\nabla_j \varphi(x)}), \quad \forall i\in \hat A^+.
\]
Combining this inequality with~\eqref{normdiffl1}, we obtain
\begin{equation}\label{normdiff2_l1}
\norm{\tilde x-x}^2 \le \epsilon \tau^2 (\abs{\hat A^+}+1) \sum_{i \in \hat A^+} |x_i| (\nabla_i \varphi(x) \, \sign(x_i) + \abs{\nabla_j \varphi(x)})
\end{equation}
From~\eqref{ineqGrad_l1} and~\eqref{normdiff2_l1}, it follows that the left-hand side term of~\eqref{eqThesis_l1} is less than or equal to
\[
\biggl(\epsilon \tau^2 \frac{L(2C+1)}2 (\abs{\hat A^+}+1) - 1\biggr) \sum_{i \in \hat A^+} |x_i| (\nabla_i \varphi(x) \, \sign(x_i) + \abs{\nabla_j \varphi(x)})
\]
The desired result is hence obtained, since inequality~\eqref{eqThesis_l1} follows from the assumption we made on~$\epsilon$, using the fact that
$\abs{\hat A^+} \le n - 1$ (as a consequence of Proposition~\ref{prop0_l1}) and
$\sum_{i \in \hat A^+} |x_i| (\nabla_i \varphi(x) \, \sign(x_i) + \abs{\nabla_j \varphi(x)}) \ge 0$ (as a consequence of~\eqref{normdiff2_l1}). 
\end{proof}

We would like to highlight that the parameter $\epsilon$ depends on $n$ by Assumption~\ref{ass:eps_l1}.
However, from the proof of the above proposition, it is clear that $n$ could be replaced by $|\hat A^+|+1$, with
$\hat A^+$ defined as in~\eqref{eq:hatAp}. Note that $|\hat A^+|$ might be much smaller than $n$.

\section{The algorithm}\label{sec:alg}
Based on the active and non-active set estimates described above, we design a suitable active-set algorithm for solving problem~\eqref{probl1},
exploiting the property of our estimates and using an appropriate projected-gradient direction.
At the beginning of each iteration $k$, we have a feasible point $x^k$
and we compute $A_{\ell_1}(x^k)$ and $N_{\ell_1}(x^k)$, which, for ease of notation,
we will refer to as $A_{\ell_1}^k$ and $N_{\ell_1}^k$, respectively.
Then, we perform two main steps:
\begin{itemize} 
\item first, we produce the point $\tilde x^k$ as explained in Proposition~\ref{prop1_l1}, obtaining a decrease in the objective function
    (if $x^k \ne \tilde x^k$);
\item afterward, we move all the variables in $N^k_{\ell_1}$ by computing a projected-gradient direction $d^k$ over the given non-active manifold
    and using a non-monotone Armijo line search. \kblue{In particular, the reference value $\bar\varphi$ for the line search 
    is defined as the maximum among the last $n_m$ function evaluations, with $n_m$ being a positive parameter.}
\end{itemize}

In Algorithm~\ref{alg:ASl1}, we report the scheme of the proposed algorithm, named \texttt{Active-Set algorithm for minimization over the $\ell_1$-ball (\ASl1)}.

\begin{algorithm}[htp]
\caption{\texttt{Active-Set algorithm for minimization over the $\ell_1$-ball (\ASl1)}}
\label{alg:ASl1}
\begin{algorithmic}
{\small
\par\vspace*{0.1cm}
\item[]$\,\,\,1$\hspace*{0.1truecm}\vspace*{0.05cm} Choose a feasible point $x^0$ \kblue{and choose $\epsilon>0$}
\item[]$\,\,\,2$\hspace*{0.1truecm}\vspace*{0.05cm} For $k=0,1,\ldots$
\item[]$\,\,\,3$\hspace*{0.9truecm}\vspace*{0.05cm} If $x^k$ is a stationary point, then STOP
\item[]$\,\,\,4$\hspace*{0.9truecm}\vspace*{0.05cm} Compute $A_{\ell_1}^k = A_{\ell_1}(x^k)$ and $N_{\ell_1}^k = N_{\ell_1}(x^k)$
\item[]$\,\,\,5$\hspace*{0.9truecm}\vspace*{0.05cm} Compute $J_{\ell_1}^k = J_{\ell_1}(x^k)$, choose $j \in J^k_{\ell_1}$ and define $\tilde N_{\ell_1}^k = N_{\ell_1}^k\setminus\{j\}$
\item[]$\,\,\,6$\hspace*{0.9truecm}\vspace*{0.05cm} Set $\tilde x^k_{A_{\ell_1}^k} = 0\,$,
                                  $\,\tilde x^k_{\tilde N_{\ell_1}^k} = x^k_{\tilde N_{\ell_1}^k}\,$ and
                                  $\displaystyle{\,\tilde x^k_j = x^k_j - \sign(\nabla_j \varphi(x^k))\sum_{h \in A_{\ell_1}^k} \abs{x^k_h}}$
\item[]$\,\,\,7$\hspace*{0.9truecm}\vspace*{0.05cm} Compute a projected-gradient type direction $d^k$ such that $d^k_{A_{\ell_1}^k} = 0$
\item[]$\,\,\,8$\hspace*{0.9truecm}\vspace*{0.05cm} Compute a stepsize $\alpha^k\in [0,1]$ by Algorithm~\ref{alg:ls_as}

\item[]$\,\,\,9$\hspace*{0.9truecm}\vspace*{0.05cm} Set $x^{k+1}=\tilde x^k + \alpha^k d^k$
\item[]$10$\hspace*{0.1truecm}\vspace*{0.05cm} End for
\par\vspace*{0.1cm}
}
\end{algorithmic}
\end{algorithm}

\begin{algorithm}[h!]
\caption{\texttt{Non-monotone Armijo line search }}
\label{alg:ls_as}
\begin{algorithmic}
{\small
\par\vspace*{0.1cm}
\item[]$\,\,\,0$\hspace*{0.1truecm}\vspace*{0.05cm} Choose $\delta \in (0,1)$, $n_m>0$  and $\gamma\in (0,1)$
\item[]\,\,\,$1$\hspace*{0.1truecm}\vspace*{0.05cm} Update $\displaystyle{\bar \varphi^k = \max_{0\le i\le \min\{n_m,k\}} \varphi(\tilde x^{k-i})}$
\item[]$\,\,\,2$\hspace*{0.1truecm}\vspace*{0.05cm} If $\nabla \varphi(\tilde x^k)^Td^k < 0$ then
\item[]$\,\,\,3$\hspace*{0.9truecm}\vspace*{0.05cm} Set $\alpha = 1$
\item[]$\,\,\,4$\hspace*{0.9truecm}\vspace*{0.05cm} While $\varphi(\tilde x^k+ \alpha d^k )> \bar \varphi^k +\gamma\, \alpha\, \nabla \varphi(\tilde x^k)^T d^k$
\item[]$\,\,\,5$\hspace*{1.7truecm}\vspace*{0.05cm} Set $\alpha = \delta \alpha$
\item[]$\,\,\,6$\hspace*{0.9truecm}\vspace*{0.05cm} End while
\item[]$\,\,\,7$\hspace*{0.1truecm}\vspace*{0.05cm} Else
\item[]$\,\,\,8$\hspace*{0.9truecm}\vspace*{0.05cm} Set $\alpha = 0$
\item[]$\,\,\,9$\hspace*{0.1truecm}\vspace*{0.05cm} End if
\item[]$10$\hspace*{0.1truecm}\vspace*{0.05cm} Set $\alpha^k = \alpha$
\par\vspace*{0.1cm}
}
\end{algorithmic}
\end{algorithm}

The search direction $d^k$ at $\tilde x^k$ (see line~$7$ of Algorithm~\ref{alg:ASl1})
is made of two subvectors: $d^k_{A_{\ell_1}^k}$ and $d^k_{N_{\ell_1}^k}$.
Since we do not want to move the variables in $A_{\ell_1}^k$, we simply set $d^k_{A_{\ell_1}^k}=0$.
For $d^k_{N_{\ell_1}^k}$, we compute a projected gradient direction in a properly defined manifold.
In particular, let $\mathcal B_{N_{\ell_1}^k}$ be the set defined as
\begin{equation}\label{subspaceN}
\mathcal B_{N_{\ell_1}^k} = \{x\in \R^n \colon \|x\|_1 \le \tau, \, x_i = 0, \, \forall  i\notin N_{\ell_1}^k\}
\end{equation}
and let $P(\cdot)_{\mathcal B_{N_{\ell_1}^k}}$ denote the projection onto the $\mathcal B_{N_{\ell_1}^k}$.
We also define
\begin{equation}\label{x_proj}
\hat x^k = P\bigl(\tilde  x^k-m^k\nabla \varphi(\tilde x^k)\bigr)_{\mathcal B_{N_{\ell_1}^k}},
\end{equation}
where $0 < \underline m \le m^k \le \overline m < \infty$
and with $\underline m$, $\overline m$ being two constants.
Then, $d^k_{N_{\ell_1}^k}$ is defined as
\begin{equation}\label{d}
d^k_{N_{\ell_1}^k} = \hat x^k-\tilde x^k.
\end{equation}

In the practical implementation of \ASl1, we compute the coefficient $m^k$ so that the resulting search direction is a spectral (or Barzilai-Borwein) gradient direction. This choice will be described in Section~\ref{sec:results}.

\subsection{Global convergence analysis}
In order to prove global convergence of \ASl1\ to stationary points, we need some intermediate results.
We first point out a property of our search directions, using standard results on projected directions.
\begin{lemma}\label{lemma:dbbnm}
Let Assumption~\ref{ass:eps_l1} hold and let $\{x^k\}$ be the sequence of points produced by~\ASl1.
At every iteration $k$, we have that
\begin{equation}\label{eq:derdirbound}
 \nabla \varphi(\tilde x^k)^Td^k \le \kblue{ - \frac{1}{\overline m}} \|d^k\|^2
\end{equation}
and $\{d^k\}$ is a bounded sequence.
\end{lemma}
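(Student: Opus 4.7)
The plan is to derive the first inequality from the standard variational characterization of the projection onto a closed convex set, and to get the boundedness directly from the fact that $\hat x^k$ and $\tilde x^k$ both belong to the $\ell_1$-ball.

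The preliminary step is to check that $\tilde x^k \in \mathcal B_{N^k_{\ell_1}}$, so that $\tilde x^k$ is a legitimate test point in the projection inequality. The zero pattern is immediate from the construction in line~$6$ of Algorithm~\ref{alg:ASl1} together with Proposition~\ref{prop0_l1}, which guarantees that the index $j$ chosen in line~$5$ satisfies $j \in N^k_{\ell_1}$; hence all nonzero components of $\tilde x^k$ lie in $N^k_{\ell_1}$. Feasibility $\|\tilde x^k\|_1 \le \tau$ follows from the triangle inequality:
\[
|\tilde x^k_j| \le |x^k_j| + \sum_{h \in A^k_{\ell_1}} |x^k_h|,
\]
so that $\|\tilde x^k\|_1 \le \sum_{i \in \tilde N^k_{\ell_1}} |x^k_i| + |x^k_j| + \sum_{h \in A^k_{\ell_1}} |x^k_h| = \|x^k\|_1 \le \tau$.

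Next, since $d^k_{A^k_{\ell_1}} = 0$ and both $\hat x^k$ and $\tilde x^k$ vanish outside $N^k_{\ell_1}$, the identity $d^k = \hat x^k - \tilde x^k$ holds on the whole space. The standard characterization of the projection onto the closed convex set $\mathcal B_{N^k_{\ell_1}}$ applied to $\hat x^k = P(\tilde x^k - m^k\nabla\varphi(\tilde x^k))_{\mathcal B_{N^k_{\ell_1}}}$, with test point $z = \tilde x^k$, gives
\[
\bigl(\tilde x^k - m^k \nabla \varphi(\tilde x^k) - \hat x^k\bigr)^T (\tilde x^k - \hat x^k) \le 0.
\]
Rewriting both sides in terms of $d^k = \hat x^k - \tilde x^k$ yields $\|d^k\|^2 + m^k \nabla\varphi(\tilde x^k)^T d^k \le 0$, so that
\[
\nabla\varphi(\tilde x^k)^T d^k \le -\frac{1}{m^k}\|d^k\|^2 \le -\frac{1}{\overline m}\|d^k\|^2,
\]
where the last inequality uses $m^k \le \overline m$. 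This establishes~\eqref{eq:derdirbound}.

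Finally, boundedness of $\{d^k\}$ follows immediately from $\|d^k\| \le \|\hat x^k\| + \|\tilde x^k\| \le \|\hat x^k\|_1 + \|\tilde x^k\|_1 \le 2\tau$, since both $\hat x^k$ and $\tilde x^k$ lie in the $\ell_1$-ball of radius $\tau$. The only delicate point in the whole argument is the verification of $\|\tilde x^k\|_1 \le \tau$ above, which hinges on the triangle inequality applied to the modified $j$-th component; everything else is a direct application of the projection inequality.
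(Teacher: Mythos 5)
Your proof is correct and follows essentially the same route as the paper: both derive \eqref{eq:derdirbound} from the variational characterization of the projection onto $\mathcal B_{N_{\ell_1}^k}$ with test point $\tilde x^k$ (your explicit verification that $\tilde x^k\in\mathcal B_{N_{\ell_1}^k}$ is a welcome detail the paper leaves implicit). The only difference is in the boundedness of $\{d^k\}$, where the paper uses non-expansiveness of the projection together with boundedness of $\{\nabla\varphi(\tilde x^k)\}$ to get $\|d^k\|\le \overline m\,\|\nabla\varphi(\tilde x^k)\|$, while you bound $\|d^k\|\le 2\tau$ directly from the compactness of the feasible set; both are valid and equally serviceable for the convergence analysis.
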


\begin{proof}
Using the properties of the projection, at very iteration $k$ we have
\[
(\tilde x^k - m^k\nabla \varphi(\tilde x^k)-\hat x^k)^T(x -\hat x^k) \le 0, \quad \forall x\in \mathcal B_{N_{\ell_1}^k},
\]
with $B_{N_{\ell_1}^k}$ and $\hat x^k$ being defined as in~\eqref{subspaceN} and~\eqref{x_proj}, respectively.
Choosing $x=\tilde x^k$ in the above inequality and recalling the definition of $d^k$ given in~\eqref{d}, we get
\[
\nabla\varphi(\tilde x^k)^Td^k \le - \frac{1}{m^k} \|d^k\|^2.
\]
Since \kblue{$m^k \leq \overline m$}, for all $k$ we obtain~\eqref{eq:derdirbound}.

Furthermore, from the property of the projection we have that
\[
\|d^k\| = \| P(\tilde x^k - m^k\nabla \varphi(\tilde x^k)) - \tilde x^k\| \le m^k \|\nabla \varphi(\tilde x^k)\|.
\]
Since $m^k \le \overline m$ and $\{\nabla \varphi(\tilde x^k)\}$ is bounded, it follows that $\{d^k\}$ is bounded.

\end{proof}

We now prove that the
sequence~$\{\bar \varphi^k\}$ converges.
\begin{lemma}\label{lemma:barphi}
Let Assumption~\ref{ass:eps_l1} hold and let $\{x^k\}$ be the sequence of points produced by~\ASl1.
Then, the sequence $\{ \bar \varphi^k\}$ is non-increasing and converges to a value $\bar \varphi$.
\end{lemma}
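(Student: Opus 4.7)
The plan is to prove the two claims separately: first the monotonicity $\bar\varphi^{k+1} \le \bar\varphi^k$, then a uniform lower bound on $\{\bar\varphi^k\}$, after which convergence is automatic for a monotone real sequence. The crucial intermediate estimate is
\[
\varphi(\tilde x^{k+1}) \le \bar\varphi^k \qquad \text{for every } k \ge 0,
\]
and once this is in hand the monotonicity will follow from a simple comparison of the two ``windows'' of maximization.

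To obtain this key estimate I would first invoke Proposition~\ref{prop1_l1} at iteration $k+1$, which yields $\varphi(\tilde x^{k+1}) \le \varphi(x^{k+1})$, and then bound $\varphi(x^{k+1})$ via Algorithm~\ref{alg:ls_as}. There are two branches. If $\nabla\varphi(\tilde x^k)^T d^k < 0$, the non-monotone Armijo acceptance condition combined with Lemma~\ref{lemma:dbbnm} (which gives $\nabla\varphi(\tilde x^k)^T d^k \le 0$) yields
\[
\varphi(x^{k+1}) \le \bar\varphi^k + \gamma\,\alpha^k\,\nabla\varphi(\tilde x^k)^T d^k \le \bar\varphi^k.
\]
Otherwise the line search sets $\alpha^k = 0$, so $x^{k+1} = \tilde x^k$ and therefore $\varphi(x^{k+1}) = \varphi(\tilde x^k)$, which is already one of the values (namely the one for $i=0$) in the maximum defining $\bar\varphi^k$; the bound holds trivially.

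For the monotonicity, I would compare the index windows $\{k+1-i : 0 \le i \le \min\{n_m,k+1\}\}$ and $\{k-i : 0 \le i \le \min\{n_m,k\}\}$ in two regimes. If $k+1 \le n_m$, the new window is the old window with the single extra index $k+1$ appended, so $\bar\varphi^{k+1} = \max\{\bar\varphi^k,\,\varphi(\tilde x^{k+1})\} \le \bar\varphi^k$. If $k+1 > n_m$, the new window $\{k+1-n_m,\ldots,k+1\}$ agrees with $\{k-n_m,\ldots,k\}$ except that it drops $k-n_m$ and appends $k+1$; the maximum of $\varphi(\tilde x^j)$ over the common indices is at most $\bar\varphi^k$, and $\varphi(\tilde x^{k+1}) \le \bar\varphi^k$ by the previous step, so $\bar\varphi^{k+1} \le \bar\varphi^k$ in this case as well.

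Finally, to conclude convergence to a finite value, I would use that every iterate lies in the compact $\ell_1$-ball: feasibility of $\tilde x^k$ is part of Proposition~\ref{prop1_l1}, and feasibility of $x^{k+1}$ follows because, with $\alpha^k \in [0,1]$, the point $x^{k+1} = (1-\alpha^k)\tilde x^k + \alpha^k \hat x^k$ is a convex combination of the feasible points $\tilde x^k$ and $\hat x^k$. Continuity of $\varphi$ on this compact set provides a uniform lower bound on each $\varphi(\tilde x^k)$, hence on $\bar\varphi^k$. A non-increasing sequence bounded below converges, producing the claimed limit $\bar\varphi$. The only real obstacle is the case-analysis bookkeeping --- splitting into the two line-search branches for the key estimate, and into the two regimes $k+1 \le n_m$ and $k+1 > n_m$ for the window comparison --- since everything else is a direct chaining of properties already established.
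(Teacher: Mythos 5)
Your proof is correct and follows essentially the same route as the paper's: the chain $\bar\varphi^{k+1}\le\max\{\bar\varphi^k,\varphi(\tilde x^{k+1})\}$ together with $\varphi(\tilde x^{k+1})\le\bar\varphi^k$, followed by a compactness lower bound. In fact you supply a detail the paper elides --- the line search only controls $\varphi(x^{k+1})$, and passing to $\varphi(\tilde x^{k+1})$ requires Proposition~\ref{prop1_l1} --- as well as the explicit $\alpha^k=0$ branch, so no gaps.
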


\begin{proof}
First note that the definition of~$\bar \varphi^k$ ensures~$\bar
\varphi^k\le \varphi(\tilde x^0)$ and hence~$\varphi(\tilde x^k)\le \varphi(\tilde x^0)$ for all~$k$.
Moreover, we have that
\begin{equation*}
  \bar \varphi^{k+1} = \max\limits_{0\le i\le \min\{n_m,k+1\}} \varphi(\tilde x^{k + 1
    -i}) \le \max\{ \bar \varphi^k , \varphi(\tilde x^{k+1})\}.
\end{equation*}
Since $\varphi(\tilde x^{k+1}) \le \bar \varphi^k$ by the definition of the line search, we
derive $\bar \varphi^{k+1} \le \bar \varphi^k$, which proves that the sequence $\{
\bar \varphi^k\}$ is non-increasing. This sequence is
bounded from below by the minimum of~$\varphi$ over the unit simplex and hence converges. 
\end{proof}

The next intermediate result shows that the distance between 
$\{x^k\}$ and $\{\tilde x^k\}$ converges to zero and that
 the sequences
$\{\varphi(x^k)\}$ and $\{\varphi(\tilde x^k)\}$ converge to the same point, using similar arguments as in~\cite{grippo:1986}.
\begin{proposition}\label{prop:limgraddirnm-0}
	Let Assumption~\ref{ass:eps_l1} hold and let $\{x^k\}$ be the sequence of points produced by~\ASl1.
	Then,
	\begin{gather}
	\lim_{k \to \infty} \|\tilde x^k-x^k\| = 0, \label{simpl_x_tilde_to_xnm} \\
	\lim_{k\to\infty} \varphi(\tilde x^k) = \lim_{k\to\infty} \varphi( x^k) =\bar \varphi. \label{eq:varphixk}
	\end{gather}
\end{proposition}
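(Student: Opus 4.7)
The guiding idea is the classical Grippo--Lampariello--Lucidi argument, adapted to the composite step structure of~\ASl1\ (the jump from $x^k$ to $\tilde x^k$ followed by the non-monotone Armijo step from $\tilde x^k$ to $x^{k+1}$). First I will combine the two descent inequalities available at every iteration into a single chain: from the line search (Algorithm~\ref{alg:ls_as}) one has $\varphi(x^{k+1})\le \bar\varphi^k + \gamma\alpha^k\nabla\varphi(\tilde x^k)^T d^k$ whenever $\nabla\varphi(\tilde x^k)^T d^k<0$ (trivially when $\alpha^k=0$), while Proposition~\ref{prop1_l1} gives $\varphi(\tilde x^{k+1})\le \varphi(x^{k+1})-CL\|\tilde x^{k+1}-x^{k+1}\|^2$. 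Chained together, for $k\ge 1$,
\begin{equation*}
\varphi(\tilde x^k)+CL\,\|\tilde x^k-x^k\|^2\;\le\;\bar\varphi^{k-1}+\gamma\,\alpha^{k-1}\,\nabla\varphi(\tilde x^{k-1})^T d^{k-1}.
\end{equation*}

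Next, following the GLL scheme, I introduce the index $l(k)\in\{k-\min\{n_m,k\},\dots,k\}$ such that $\bar\varphi^k=\varphi(\tilde x^{l(k)})$. By Lemma~\ref{lemma:barphi} we have $\bar\varphi^k\to\bar\varphi$, hence $\varphi(\tilde x^{l(k)})\to\bar\varphi$. Evaluating the chained inequality above at $l(k)$ in place of $k$, rearranging, and using that the gradient term and the $\|\cdot\|^2$ term are of opposite sign, the right-hand side converges to $\bar\varphi-\bar\varphi=0$ and both non-negative terms on the left must vanish:
\begin{equation*}
\lim_{k\to\infty}\|\tilde x^{l(k)}-x^{l(k)}\|=0,\qquad \lim_{k\to\infty}\alpha^{l(k)-1}\,\nabla\varphi(\tilde x^{l(k)-1})^T d^{l(k)-1}=0.
\end{equation*}
The second limit, combined with the bound $\nabla\varphi(\tilde x^k)^T d^k\le -\|d^k\|^2/\overline m$ from Lemma~\ref{lemma:dbbnm}, yields $\alpha^{l(k)-1}\|d^{l(k)-1}\|^2\to 0$, and since $\alpha^{l(k)-1}\le 1$ this upgrades to $\alpha^{l(k)-1}\|d^{l(k)-1}\|\to 0$, i.e.\ $\|x^{l(k)}-\tilde x^{l(k)-1}\|\to 0$.

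The main obstacle is the familiar step of propagating these subsequence limits along the whole sequence $\{k\}$. I will do this by induction on $j\ge 1$, proving
\begin{equation*}
\lim_k \|\tilde x^{l(k)-j}-x^{l(k)-j}\|=0,\quad \lim_k \alpha^{l(k)-j}\|d^{l(k)-j}\|=0,\quad \lim_k \varphi(\tilde x^{l(k)-j})=\bar\varphi.
\end{equation*}
The base case $j=1$ was just established. For the inductive step, the chain $\tilde x^{l(k)-j+1}\leftarrow x^{l(k)-j+1}\leftarrow \tilde x^{l(k)-j}$ is short in norm by the inductive hypothesis, so by continuity of $\varphi$ on the compact feasible set one infers $\varphi(\tilde x^{l(k)-j})\to\bar\varphi$; then applying the chained inequality above at $l(k)-j+1$ and arguing as in the subsequence step gives the other two limits at index $l(k)-j$. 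Since $l(k)\ge k-n_m$, iterating the induction $n_m+1$ times—or equivalently, applying it to the shifted sequence $l(k+n_m+1)\ge k$—yields the same three limits with $l(k)-j$ replaced by $k$.

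Finally, I will assemble the conclusions. The inductive output directly gives~\eqref{simpl_x_tilde_to_xnm} and $\varphi(\tilde x^k)\to\bar\varphi$. For $\varphi(x^k)\to\bar\varphi$ in~\eqref{eq:varphixk}, Proposition~\ref{prop1_l1} sandwiches $\varphi(x^k)$ between $\varphi(\tilde x^k)$ and $\varphi(\tilde x^k)+CL\|\tilde x^k-x^k\|^2$ (once rearranged), and both bounds tend to $\bar\varphi$ in view of \eqref{simpl_x_tilde_to_xnm}, closing the proof.
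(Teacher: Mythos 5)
Your proposal follows essentially the same route as the paper's proof: the same chaining of the line-search inequality with Proposition~\ref{prop1_l1}, the same GLL index $l(k)$ with $\bar\varphi^k=\varphi(\tilde x^{l(k)})$, the same induction on $j$ to propagate the subsequence limits, and the same use of $l(k)-k$ being bounded by $n_m$ to transfer them to the whole sequence. The only inaccuracy is in the final assembly: Proposition~\ref{prop1_l1} gives $\varphi(x^k)\ge\varphi(\tilde x^k)+CL\|\tilde x^k-x^k\|^2$, a one-sided bound, so it does not ``sandwich'' $\varphi(x^k)$ from above as you claim; the missing direction follows instead from the uniform continuity of $\varphi$ on the compact feasible set together with $\|\tilde x^k-x^k\|\to 0$, which is exactly how the paper concludes $\varphi(x^k)\to\bar\varphi$.
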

\begin{proof}
	For each~$k\in\N$, choose $l(k)\in\{k - \min(k,n_m),\dots,k\}$ such that $\bar \varphi^k = \varphi(\tilde x^{l(k)})$.
	From Proposition~\ref{prop1_l1}
	we can write
	\begin{equation}\label{eq:xlk}
		\varphi(\tilde x^{l(k)})\le \varphi(x^{l(k)}) - CL \norm{\tilde x^{l(k)}-x^{l(k)}}^2.
	\end{equation}
	Furthermore, from the instructions of the line search and the fact that the sequence $\{\varphi(\tilde x^{l(k)})\}$ is non-increasing, for all $k \ge1 $we have
	\[\varphi(x^{l(k)}) \leq
	\varphi(\tilde x^{l(k -1)})  + \gamma \alpha^{l(k) -1} \nabla \varphi(\tilde x^{l(k) -1})^T d^{l(k) -1},\]
	and then,
	\begin{equation}\label{eq:maj}
		\varphi(\tilde x^{l(k)}) \le  \varphi(\tilde x^{l(k -1)})  + \gamma \alpha^{l(k) -1} \nabla \varphi(\tilde x^{l(k) -1})^T d^{l(k) -1} - CL \norm{\tilde x^{l(k)}-x^{l(k)}}^2.
	\end{equation}
	Since $\{\varphi(\tilde x^{l(k)})\}$ converges to $\bar \varphi$, we have that~\eqref{eq:xlk} and~\eqref{eq:maj} imply
	\begin{align}
		& \lim_{k\to\infty}  \|\tilde x^{l(k)} - x^{l(k)}\| = 0, \label{xtildexk} \\
		&  \lim_{k\to\infty} \alpha^{l(k)-1} \nabla \varphi(\tilde x^{l(k) -1})^T d^{l(k) -1} =0 \notag.
	\end{align}
	Furthermore, from Lemma~\ref{lemma:dbbnm} we have
	\[
	\nabla \varphi(\tilde x^{l(k) -1})^T d^{l(k) -1} \leq \kblue{-  \frac{1}{\overline m}}  \|d^{l(k) -1}\|^2,
	\]
	and then the following limit holds:
	\begin{equation}\label{(8)}
		\lim_{k\to\infty}  \alpha^{l(k) -1} \|d^{l(k) -1}\| = 0.
	\end{equation}
	Considering that
	$x^{l(k)} = \tilde x^{l(k)-1} + \alpha^{l(k) -1} d^{l(k) -1}$,
	\eqref{(8)} implies
	\begin{equation*}\label{x-xtildelk}
		\lim_{k\to\infty}   \|\tilde x^{l(k)-1} - x^{l(k)}\| = 0.
	\end{equation*}
	Furthermore, from the triangle inequality, we can write
	\[
	\|\tilde x^{l(k)-1} - \tilde x^{l(k)}\| \leq \|\tilde x^{l(k)-1} - x^{l(k)}\|+\|x^{l(k)} - \tilde x^{l(k)}.\|
	\]
	Then,
	\begin{equation}\label{diff2iniz}
	\lim_{k\to\infty}   \|\tilde x^{l(k)-1} - \tilde x^{l(k)}\| = 0
	\end{equation}
	and in particular, from the uniform continuity of $\varphi$  over $\{x\in \R^n : \|x\|_1 \leq \tau\}$, we have
	\begin{equation}\label{eq:phiind}
		\lim_{k\to\infty}\varphi(\tilde x^{l(k)-1}) = \lim_{k\to\infty}\varphi(\tilde x^{l(k)}) = \bar\varphi.
	\end{equation}
	Let
	\[
	\hat l(k) = l(k+n_m+2).
	\]
	We show by induction that, for any given $j\geq 1$,
	\begin{align}
	&	\lim_{k\to\infty}   \|x^{\hat l(k) -{(j-1)}}-\tilde x^{\hat l(k) -{(j-1)}}\| = 0, \label{diff1} \\
	&	\lim_{k\to\infty} \|\tilde x^{\hat l(k) -{(j-1)}}-\tilde x^{\hat l(k) -{j}}\| = 0, \label{diff2}\\		
	&\lim_{k\to\infty} \varphi(\tilde x^{\hat l(k)-j}) = \lim_{k\to\infty} \varphi(\tilde x^{l(k)}). \label{varphij}	
	\end{align}	
	If $j=1$, since $\{\hat l(k)\}\subset \{l(k)\}$~we have that~\eqref{diff1}, \eqref{diff2} and~\eqref{varphij}
	follow from~\eqref{xtildexk}, \eqref{diff2iniz} and~\eqref{eq:phiind}, respectively.
	
	Assume now that~\eqref{diff1}, \eqref{diff2} and~\eqref{varphij} hold for a given $j$.
	Then, reasoning as in the beginning of the proof,
	from the instructions of the line search and considering that $\{\varphi(\tilde x^{l(k)})\}$ is non-increasing, we can write
	\[
	\varphi(\tilde x^{\hat l(k)-j}) \le \varphi(x^{\hat l(k)-j}) - CL \norm{\tilde x^{\hat l(k)-j}-x^{\hat l(k)-j}}^2
	\]
	and
	\[\varphi(x^{\hat l(k) -j})\leq \varphi(\tilde x^{\hat l(k-(j+1))}) + \gamma \alpha^{\hat l(k) -(j+1)} \nabla \varphi(\tilde x^{\hat l(k) -(j+1)})^\top d^{\hat l(k) -(j+1)}.\]
	Therefore we get
	\[
	\begin{split}
	\varphi(\tilde x^{\hat l(k)-j})
	\leq & \varphi(\tilde x^{\hat l(k -(j+1))}) + \gamma \alpha^{\hat l(k) -(j+1)} \nabla \varphi(\tilde x^{\hat l(k)-(j+1)})^T d^{\hat l(k)-(j+1)} + \\
	& -	CL \norm{\tilde x^{l(k)-j}-x^{l(k)-j}}^2,
	\end{split}
	\]
	so that
	\begin{align}
		& \lim_{k\to\infty} \alpha^{\hat l(k)-(j+1)} \nabla \varphi(\tilde x^{\hat l(k)-(j+1)})^T d^{\hat l(k)-(j+1)}=0,  \label{diff2-1} \\
		& \lim_{k\to\infty} \norm{\tilde x^{l(k)-j}-x^{l(k)-j}}=0 \label{limalphagraddirj} .
	\end{align}
	The limit in~\eqref{limalphagraddirj} implies~\eqref{diff1} for $j+1$.
	The properties of the direction stated in Lemma~\ref{lemma:dbbnm}, combined with~\eqref{diff2-1},  ensure that 
	\begin{equation}\label{9j+1}
		\lim_{k\to\infty}  \alpha^{\hat l(k) -(j+1)} \|d^{\hat l(k) -(j+1)}\| = 0.
	\end{equation}
	Furthermore, since
	$x^{\hat l(k)-j} = \tilde x^{\hat l(k)-(j+1)} + \alpha^{\hat l(k) -(j+1)} d^{\hat l(k) -(j+1)}$,
	we have that~\eqref{9j+1} implies
	\begin{equation*}\label{x-xtildelj+1}
		\lim_{k\to\infty}   \|\tilde x^{\hat l(k)-(j+1)} - x^{\hat l(k)-j}\| = 0.
	\end{equation*}
	Using the triangle inequality, we can write
	\[\|\tilde x^{\hat l(k)-(j+1)} - \tilde x^{\hat l(k)-j}\| \leq \|\tilde x^{\hat l(k)-(j+1)} - x^{\hat l(k)-j}\|+\|x^{\hat l(k)-j}
	- \tilde x^{\hat l(k)-j}\|.\]
	Then,
	\[
		\lim_{k\to\infty}   \|\tilde x^{\hat l(k)-(j+1)} - \tilde x^{\hat l(k)-j}\| = 0
	\]
	and in particular, from the uniform continuity of $\varphi$  over $\{x\in \R^n : \|x\|_1 \leq \tau\}$, we can write
	\[
	\lim_{k\to\infty}\varphi(\tilde x^{\hat l(k)-(j+1)}) = \lim_{k\to\infty}\varphi(\tilde x^{\hat l(k)-j}) = \bar\varphi.
	\]
	Thus we conclude that~\eqref{diff2} and~\eqref{varphij}  hold for any given $j\geq 1$.
	%
	Recalling that
	\begin{align*}
		& \hat l(k)-(k+1) = l(k+n_m+2)-(k+1)\le n_m+1, \\
	&	\|\tilde x^{k+1} - \tilde x^{\hat l(k)}\|\leq  \sum_{\kblue{j = k+1}}^{\hat l(k)-1} \|\tilde x^{j+1} - \tilde x^{j}\|,
	\end{align*} 
%
	we have that~\eqref{diff2}  implies
		\begin{equation}\label{diff1-fin}
	\lim_{k\to\infty}  \|\tilde x^{k+1} - \tilde x^{\hat l(k)}\| = 0.
	\end{equation}
	Furthermore, since
	\[
	\|x^{k+1} - \tilde x^{\hat l(k)}\|
	\leq \|x^{k+1} - \tilde x^{k+1}\| +\|\tilde x^{k+1} - \tilde x^{\hat l(k)}\|,
	\]
	from \eqref{diff1-fin} and \eqref{diff1} we have
			\begin{equation}\label{diff2-fin}
	\lim_{k\to\infty}  \|x^{k+1} - \tilde x^{\hat l(k)}\| = 0.
	\end{equation}
	Since $\{\varphi(\tilde x^{\hat l(k)})\}$ has a limit, from the uniform continuity of  $\varphi$  over $\{x\in \R^n : \|x\|_1 \leq \tau\}$, \eqref{diff2-fin} and \eqref{diff1-fin}  it follows that
	\begin{equation}\nonumber
		\lim_{k\to\infty} \varphi(x^{k+1}) =\lim_{k\to\infty} \varphi(x^k) = \lim_{k\to\infty} \varphi(\tilde x^{\hat l(k)}) = \bar \varphi
	\end{equation}
	and
	\[\lim_{k\to\infty} \varphi(\tilde x^{k+1}) = \lim_{k\to\infty} \varphi(\tilde x^k) = \lim_{k\to\infty} \varphi(\tilde x^{\hat l(k)}) = \bar \varphi,\]
	proving~\eqref{eq:varphixk}.
	From the instructions of the algorithm and Proposition~\ref{prop1_l1}, we can write
	\[
	 \varphi(\tilde x^k)\le \varphi(x^k) - CL \norm{\tilde x^k-x^k}^2,
	\]
	and then from~\eqref{eq:varphixk} we have that~\eqref{simpl_x_tilde_to_xnm} holds.
	 
\end{proof}

The following proposition states that the directional derivative $\nabla \varphi(\tilde x^k)^Td^k$
tends to zero.

\begin{proposition}\label{prop:limgraddirnm}
	Let Assumption~\ref{ass:eps_l1} hold and let $\{x^k\}$ be the sequence of points produced by~\ASl1.
	Then,
	\begin{equation}
		\lim_{k\to\infty} \nabla \varphi(\tilde x^k)^Td^k = 0. \label{simpl_gd_to_zeronm}
	\end{equation}
\end{proposition}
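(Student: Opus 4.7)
The plan is a standard non-monotone Armijo contradiction argument, made possible by the two facts already proved: (i) the reference value $\bar\varphi^k$ and the objective values $\varphi(x^k),\varphi(\tilde x^k)$ all converge to the same limit $\bar\varphi$ (Lemma~\ref{lemma:barphi} and Proposition~\ref{prop:limgraddirnm-0}), and (ii) the directional derivative is strictly controlled by the direction's norm, $\nabla\varphi(\tilde x^k)^Td^k \le -\tfrac{1}{\overline m}\|d^k\|^2$, with $\{d^k\}$ bounded (Lemma~\ref{lemma:dbbnm}).

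First I would dispose of the easy case. Whenever $\nabla\varphi(\tilde x^k)^Td^k\ge 0$, Lemma~\ref{lemma:dbbnm} forces $\|d^k\|=0$ and hence the directional derivative is exactly $0$. So it suffices to work with the subsequence of indices $K=\{k:\nabla\varphi(\tilde x^k)^Td^k<0\}$, on which the line search performs at least one Armijo test. For $k\in K$, the acceptance condition together with $x^{k+1}=\tilde x^k+\alpha^k d^k$ yields
\[
0 \le -\gamma\,\alpha^k\,\nabla\varphi(\tilde x^k)^Td^k \;\le\; \bar\varphi^k - \varphi(x^{k+1}),
\]
and since both $\bar\varphi^k\to\bar\varphi$ and $\varphi(x^{k+1})\to\bar\varphi$, I obtain $\alpha^k\,\nabla\varphi(\tilde x^k)^Td^k\to 0$.

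Now I argue by contradiction. Suppose~\eqref{simpl_gd_to_zeronm} fails. Then there exist $\eta>0$ and an infinite subset $K'\subseteq K$ with $\nabla\varphi(\tilde x^{k})^Td^{k}\le -\eta$ for all $k\in K'$. Combined with the previous display, this forces $\alpha^{k}\to 0$ along $K'$, so eventually $\alpha^{k}<1$ and the line search must have backtracked from $\alpha^{k}/\delta$. The failure of the Armijo test at the previous trial stepsize, together with $\bar\varphi^k\ge\varphi(\tilde x^k)$, gives
\[
\varphi\!\left(\tilde x^{k}+\tfrac{\alpha^{k}}{\delta}d^{k}\right)-\varphi(\tilde x^{k})\;>\;\gamma\,\tfrac{\alpha^{k}}{\delta}\,\nabla\varphi(\tilde x^{k})^Td^{k}.
\]
Applying the mean value theorem on the left, I produce $\theta^k\in(0,1)$ with
\[
\nabla\varphi\!\left(\tilde x^{k}+\theta^{k}\tfrac{\alpha^{k}}{\delta}d^{k}\right)^{T}d^{k} \;>\; \gamma\,\nabla\varphi(\tilde x^{k})^Td^{k}.
\]

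The final step extracts a convergent sub-subsequence. The sequence $\{\tilde x^k\}$ lies in the compact $\ell_1$-ball and $\{d^k\}$ is bounded by Lemma~\ref{lemma:dbbnm}, so along some $K''\subseteq K'$ I can assume $\tilde x^k\to\bar x$ and $d^k\to \bar d$. Because $\alpha^k\to 0$ and $\{d^k\}$ is bounded, the argument of the gradient on the left converges to $\bar x$, and continuity of $\nabla\varphi$ yields, in the limit, $\nabla\varphi(\bar x)^T\bar d\ge\gamma\,\nabla\varphi(\bar x)^T\bar d$, i.e.\ $(1-\gamma)\nabla\varphi(\bar x)^T\bar d\ge 0$. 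However, passing to the limit in $\nabla\varphi(\tilde x^{k})^Td^{k}\le-\eta$ gives $\nabla\varphi(\bar x)^T\bar d\le-\eta<0$, a contradiction. The main subtle point here — and the one that needs care — is justifying that the two sides of the mean-value inequality really converge simultaneously; this hinges on boundedness of $\{d^k\}$ (so that $\tfrac{\alpha^k}{\delta}d^k\to 0$) together with the Lipschitz continuity of $\nabla\varphi$, both of which are already in hand.
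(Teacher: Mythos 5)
Your proof is correct and follows essentially the same route as the paper's: show $\alpha^k\,\nabla\varphi(\tilde x^k)^Td^k\to 0$ from the Armijo acceptance condition together with the convergence of $\bar\varphi^k$ and $\varphi(x^{k+1})$ to the common limit $\bar\varphi$, then derive a contradiction via backtracking, the mean value theorem, and a convergent sub-subsequence using Lemma~\ref{lemma:dbbnm}. Your explicit dispatch of the case $\nabla\varphi(\tilde x^k)^Td^k\ge 0$ (forcing $d^k=0$) is a minor, harmless addition that the paper handles implicitly by extracting the subsequence on which the directional derivative is negative.
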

\begin{proof}
	To prove~\eqref{simpl_gd_to_zeronm}, assume by contradiction that it does not hold.
	Lemma~\ref{lemma:dbbnm} implies that the sequence $\{\nabla \varphi(\tilde x^k)^Td^k\}$ is bounded, so that
	there must exist an infinite set $K\subseteq \mathbb N$ such that
	\begin{gather}
		\nabla \varphi(\tilde x^k)^Td^k < 0, \quad \forall k \in K, \label{gd_negnm} \\
		\lim_{k \to \infty, \, k \in K} \nabla \varphi(\tilde x^k)^Td^k = -\eta < 0, \label{lim_gd_contrnm}
	\end{gather}
	for some real number $\eta>0$.
	Taking into account~\eqref{simpl_x_tilde_to_xnm} and the fact that the feasible set is compact, without loss of generality we can assume that
	both $\{x^k\}_K$ and $\{\tilde x^k\}_K$ converge to a feasible point $x^*$ (passing into a further subsequence if necessary). Namely,
	\begin{equation}\label{convxknm}
		\lim_{k\rightarrow \infty, \, k\in K} x^k = \lim_{k\rightarrow \infty, \, k\in K} \tilde x^k = x^*.
	\end{equation}
	Moreover, since the number of possible different choices of $A^k$ and $N^k$ is finite,
	without loss of generality we can also assume that
	\[
	A^k = \hat A, \quad N^k = \hat N, \quad \forall k \in K,
	\]
	and, using the fact that $\{d^k\}$ is a bounded sequence, that
	\begin{equation} \label{lim_dnm}
		\lim_{k\to\infty, \, k\in K} d^k = \bar d
	\end{equation}
	(passing again into a further subsequence if necessary).
	From~\eqref{lim_gd_contrnm}, \eqref{convxknm}, \eqref{lim_dnm} and the continuity of $\nabla \varphi$, we can write
	\begin{equation}\label{lim_gd_contr_2nm}
		\nabla \varphi(x^*)^T \bar d = -\eta < 0.
	\end{equation}
	
	Taking into account~\eqref{gd_negnm}, from the instructions of \ASl1 we have that, at every iteration $k \in K$, a
	non-monotone Armijo line search is carried out
	(see line~2 in Algorithm~\ref{alg:ls_as}) and a value $\alpha^k \in (0,1]$ is computed such that
	\[
		\varphi(x^{k+1}) \le \varphi(\tilde x^{l(k)}) + \gamma \, \alpha^k \, \nabla \varphi(\tilde x^k)^T d^k,
	\]
	or equivalently,
	\[
	\varphi(\tilde x^{l(k)}) - \varphi(x^{k+1}) \ge \gamma \, \alpha^k \, \abs{\nabla \varphi(\tilde x^k)^T d^k}.
	\]
	From
	\eqref{eq:varphixk}, the left-hand side of the above inequality converges to zero for $k \to \infty$,
	hence
	\[
		\lim_{k \to \infty, \, k \in K} \alpha^k\, \abs{\nabla \varphi(\tilde x^k)^T d^k} = 0.
	\]
	Using~\eqref{lim_gd_contrnm}, we obtain that $\displaystyle{\lim_{k \to \infty, \, k \in K} \alpha^k = 0}$.
	It follows that there exists $\bar k \in K$ such that
	\begin{equation*}
		\alpha^k < 1, \quad \forall k \ge \bar k, \, k \in K.
	\end{equation*}
	From the instructions of the line search procedure, this means that $\forall k \ge \bar k, \, k \in K$
	\begin{equation}\label{simpl_alphadeltanm}
		\varphi\Bigl( \tilde x^k + \frac{\alpha^k}{\delta} d^k \Bigr) >
		\varphi(\tilde x^{l(k)}) + \gamma \, \frac{\alpha^k}{\delta}\, \nabla \varphi(\tilde x^k)^T d^k
		\geq \varphi(\tilde x^k) + \gamma \, \frac{\alpha^k}{\delta}\, \nabla \varphi(\tilde x^k)^T d^k.
	\end{equation}
	Using the mean value theorem, $\xi^k\in (0,1)$ exists such that
	\begin{equation}\label{simpl_meanvaltheonm}
		\varphi\Bigl( \tilde x^k+ \frac{\alpha^k}{\delta} d^k \Bigr)
		= \varphi(\tilde x^{l(k)}) + \frac{\alpha^k}{\delta}\nabla \varphi\Bigl( \tilde x^k + \xi^k\frac{\alpha^k}{\delta} d^k \Bigr)^T d^k,
		\quad \forall k \ge \bar k, \, k \in K.
	\end{equation}
	In view of~\eqref{simpl_alphadeltanm} and~\eqref{simpl_meanvaltheonm}, we can write
	\begin{equation}\label{simpl_meanvaltheo2nm}
		\nabla \varphi\Bigl( \tilde x^k + \xi^k \frac{\alpha^k}{\delta} d^k \Bigr)^T d^k > \gamma\, \nabla \varphi(\tilde x^k)^T d^k,
		\quad \forall k \ge \bar k, \, k \in K.
	\end{equation}
	From~\eqref{convxknm}, and exploiting the fact that $\{\xi^k\}_K$, $\{\alpha^k\}_K$ and $\{d^k\}_K$ are bounded sequences, we get
	\begin{equation*}
		\lim_{k \to \infty, \, k \in K} \tilde x^k + \xi^k \frac{\alpha^k}{\delta} d^k = \lim_{k \to \infty, \, k \in K} \tilde x^k = x^*.
	\end{equation*}
	Therefore, taking the limits in~\eqref{simpl_meanvaltheo2nm} we obtain that
	$\nabla \varphi(x^*)^T \bar d \ge \gamma\, \nabla \varphi(x^*)^T \bar d$, or equivalently,
	$(1-\gamma) \nabla \varphi(x^*)^T \bar d \ge 0$.
	Since $\gamma \in (0,1)$, we get a contradiction with~\eqref{lim_gd_contr_2nm}. 
\end{proof}

We are finally able to state the main convergence result.
\begin{theorem}\label{th:convresnm}
Let Assumption~\ref{ass:eps_l1} hold and let $\{x^k\}$ be the sequence of points produced by~\ASl1.
Then, every limit point $x^*$ of $\{x^k\}$ is a stationary point of problem~\eqref{probl1}.
\end{theorem}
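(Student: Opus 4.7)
The plan is to build on Propositions~\ref{prop:limgraddirnm-0} and~\ref{prop:limgraddirnm} together with the projection characterization of $d^k$ to verify the two inequalities of Definition~\ref{def:stat} at any limit point $x^*$, by splitting the indices $\{1,\dots,n\}$ into the estimated active set and its complement.

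First, I would combine Lemma~\ref{lemma:dbbnm} with Proposition~\ref{prop:limgraddirnm}: the sandwich $-(1/\overline m)\|d^k\|^2\ge\nabla\varphi(\tilde x^k)^Td^k$, together with the fact that the right-hand side tends to zero, forces $\|d^k\|\to 0$. Combined with~\eqref{simpl_x_tilde_to_xnm} and $d^k=\hat x^k-\tilde x^k$, this gives that along any subsequence $\{x^k\}_K\to x^*$ both $\tilde x^k$ and $\hat x^k$ also converge to $x^*$. Since there are only finitely many possible pairs $(A_{\ell_1}^k,N_{\ell_1}^k)$, and $m^k$ lies in the compact interval $[\underline m,\overline m]$, I would extract a further subsequence (still denoted by $K$) on which $A_{\ell_1}^k\equiv\hat A$, $N_{\ell_1}^k\equiv\hat N$ and $m^k\to m^*\in[\underline m,\overline m]$. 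Feasibility of $x^*$ follows from closedness of the $\ell_1$-ball, and $x^*_{\hat A}=0$ from the fact that $\tilde x^k_{\hat A}=0$ for all $k\in K$.

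I would then verify the inequalities in~\eqref{eq:stat} case by case. For $i\in\hat A$, they follow by passing to the limit in~\eqref{act_set_l1}, which yields $\epsilon\tau\nabla\varphi(x^k)^T(\tau e_i+x^k)\le 0\le\epsilon\tau\nabla\varphi(x^k)^T(\tau e_i-x^k)$ for every $k\in K$; continuity of $\nabla\varphi$ gives $\nabla\varphi(x^*)^T(\tau e_i-x^*)\ge 0$ and $\nabla\varphi(x^*)^T(\tau e_i+x^*)\le 0$, the latter being equivalent to $\nabla\varphi(x^*)^T(-\tau e_i-x^*)\ge 0$. For $i\in\hat N$, I would use continuity of the projection onto the (fixed on $K$) convex set $\mathcal B_{\hat N}$ in~\eqref{x_proj} to obtain the fixed-point identity $x^*=P(x^*-m^*\nabla\varphi(x^*))_{\mathcal B_{\hat N}}$. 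Since $m^*>0$, this is equivalent to the variational inequality $\nabla\varphi(x^*)^T(z-x^*)\ge 0$ for every $z\in\mathcal B_{\hat N}$; choosing $z=\pm\tau e_i$, which lie in $\mathcal B_{\hat N}$ precisely because $i\in\hat N$, yields the two required inequalities at index $i$.

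The main obstacle is making sure the two arguments cover all indices consistently: for $i\in\hat A$ stationarity is read off directly from the structure of the active-set estimate, while for $i\in\hat N$ it is inherited from the projected-gradient fixed-point equation on the restricted manifold $\mathcal B_{\hat N}$. A minor technical point is the index $j\in J^k_{\ell_1}$ used to define $\tilde x^k_j$: since $j\in\hat N$ and $\sum_{h\in\hat A}|x^k_h|\to 0$ along $K$ (because $x^*_{\hat A}=0$), the shift $\tilde x^k_j-x^k_j$ is negligible in the limit and does not disrupt the argument.
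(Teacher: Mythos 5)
Your proposal is correct and follows essentially the same route as the paper: both split the indices into the (eventually constant) estimated active set $\hat A$, where the two stationarity inequalities of Definition~\ref{def:stat} are read off from the sign structure of~\eqref{act_set_l1} and passed to the limit, and the complement $\hat N$, where $\|d^k\|\to 0$ (from Lemma~\ref{lemma:dbbnm} and Proposition~\ref{prop:limgraddirnm}) yields the projected-gradient fixed-point identity on $\mathcal B_{\hat N}$ and hence the variational inequality at the remaining indices. The only cosmetic differences are that you argue directly rather than by contradiction, and that you pass to a convergent subsequence $m^k\to m^*$ where the paper instead uses monotonicity of the projection residual in the stepsize to reduce to $\underline m$; both are standard and valid.
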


\begin{proof}
From Definition \ref{def:stat}, we can characterize stationarity using condition \eqref{eq:stat}.
In particular, we can define the following continuous functions $\Psi_i(x)$ to measure the stationarity violation at a feasible point $x$:
\[
\Psi_i(x) = \max\{0, -\nabla\varphi(x)^T(\tau \,e_i - x) , -\nabla\varphi(x)^T(-\tau \,e_i - x)  \}, \quad i = 1,\ldots,n,
\]
so that a feasible point $x$ is stationary if and only if $\Psi_i(x) = 0$, $i = 1,\ldots,n$.

Now, let $x^*$ be a limit point of $\{x^k\}$ and let $\{x^k\}_K$, $K \subseteq \mathbb N$, be a subsequence converging to $x^*$. 
Namely,
\begin{equation}\label{convxk2nm}
\lim_{k\rightarrow \infty, \, k\in K} x^k =  x^*.
\end{equation}
Note that $x^*$ exists, as $\{x^k\}$ remains in the compact set $\{x\in \R^n | \norm{x}_1 \le \tau\}$. 
Since the number of possible different choices of $A^k$ and $N^k$ is finite,
without loss of generality we can assume that
\[
A^k = \hat A, \quad N^k = \hat N, \quad \forall k \in K
\]
(passing into a further subsequence if necessary).

By contradiction, assume that $x^*$ is non-stationary, that is, an index $\nu \in \{1,\ldots,n\}$ exists such that
\begin{equation}\label{contrnm}
\Psi_\nu(x^*) > 0.
\end{equation}

First, suppose that $\nu \in \hat A$. Then, from the expressions~\eqref{act_set_l1},  
we can write
\[
0 \le x^k_\nu \le \epsilon \tau\nabla \varphi(x^k)^T(\tau e_\nu - x^k)
\qquad \text{or} \qquad
0 \ge x^k_\nu \ge \epsilon \tau\nabla \varphi(x^k)^T(\tau e_\nu + x^k),
\]
so that $\Psi_\nu(x^k) = 0$, for all $k \in \bar K$.
Therefore, from~\eqref{convxk2nm}, the continuity of $\nabla \varphi$ and the continuity of the functions $\Psi_i$, we get $\Psi_\nu(x^*) = 0$,
contradicting~\eqref{contrnm}.

Then, $\nu$ necessarily belongs to $\hat N$. Namely, $x^*$ is non-stationary over $\mathcal B_{N_{\ell_1}^k}$,
with $\mathcal B_{N_{\ell_1}^k}$ defined as in~\eqref{subspaceN}.
This means that
\begin{equation}\label{stat_contrnm}
x^* \ne P\bigl(x^* - \underline m \nabla \varphi(x^*)\bigr)_{\mathcal B_{N_{\ell_1}^k}}.
\end{equation}
Using Proposition~\ref{prop:limgraddirnm} and Lemma~\ref{lemma:dbbnm}, we have that $\lim_{k\to\infty, \, k\in K} \|d^k\| = 0$,
that is, recalling the definition of $d^k$ given in~\eqref{x_proj}--\eqref{d},
\[
\lim_{k\to\infty, \, k\in K} \Biggl\| \tilde x^k - P\bigl(\tilde  x^k- m^k\nabla \varphi(\tilde x^k)\bigr)_{\mathcal B_{N_{\ell_1}^k}} \Biggr\| = 0.
\]
From the properties of the projection we have that
\[\Biggl\| \tilde x^k - P\bigl(\tilde  x^k- m^k\nabla \varphi(\tilde x^k)\bigr)_{\mathcal B_{N_{\ell_1}^k}} \Biggr\|
\geq
\Biggl\| \tilde x^k - P\bigl(\tilde  x^k- \underline m\nabla \varphi(\tilde x^k)\bigr)_{\mathcal B_{N_{\ell_1}^k}}, \Biggr\|
\]
so that the following holds
\[
\lim_{k\to\infty, \, k\in K} \Biggl\| \tilde x^k - P\bigl(\tilde  x^k- \underline m\nabla \varphi(\tilde x^k)\bigr)_{\mathcal B_{N_{\ell_1}^k}} \Biggr\| = 0.
\]
Using~\eqref{convxk2nm}, the continuity of the projection and taking into
account~\eqref{simpl_x_tilde_to_xnm} in Proposition~\ref{prop:limgraddirnm}, we obtain
\[
 \Biggl\|x^* - P\bigl(x^* -\underline m  \nabla \varphi(x^*)\bigr)_{\mathcal B_{N_{\ell_1}^k}} \Biggr\| = 0.
\]
This contradicts~\eqref{stat_contrnm}, leading to the desired result. 
\end{proof}

\section{Numerical results}\label{sec:results}
In this section, we show the practical performances of \ASl1\ on two classes of problems frequently arising in data science and machine learning that can be formulated as problem \eqref{probl1}:
\begin{itemize}
\item LASSO problems~\cite{tibshirani1996regression}, 
where
\begin{equation}\label{LASSO_obj}
\varphi(x) = \|Ax-b\|^2,
\end{equation}
for given matrix $A\in \R^{m\times n}$ and vector $b\in \R^m$;
\item $\ell_1$-constrained logistic regression problems, where
\begin{equation}\label{logreg_obj}
\varphi (x) = \sum_{i=1}^l \log(1 + \exp(-y_i x^T a_i)),
\end{equation}
with given vectors $a_i$ and scalars $y_i \in \{1,-1\}$, $i=1,\ldots,l$.
\end{itemize}


In our implementation of \ASl1, we used a non-monotone line search with memory length $n_m = 10$ (see Algorithm~\ref{alg:ls_as})
and a spectral (or Barzilai-Borwein) gradient direction
for the variables in $N_{\ell_1}^k$.
In particular, the coefficient $m^k$ appearing in~\eqref{x_proj}
was set to $1$ for $k=0$ and, for $k \ge 1$,
we employed the following formula, adapting the strategy used in~\cite{andreani2010second,birgin2002large,cristofari2020total}:
\[
m^k =
\begin{cases}
\max\{\underline m, \, m^k_a\}, \quad & \text{if } 0 < m^k_a < \overline m, \\[1.1ex]
\max\bigl\{\underline m, \, \min\{\overline m, \, m^k_b\}\bigr\}, \quad & \text{if } m^k_a \ge \overline m, \\[1.1ex]
\max\Biggl\{\underline m, \, \min\biggl\{1, \, \dfrac{\norm{\nabla_{N_{\ell_1}^k} \varphi(\tilde x^k)}}{\norm{\tilde x^k_{N_{\ell_1}^k}}}\biggr\}\Biggr\}, \quad & \text{if } m^k_a \le 0,
\end{cases}
\]
where $\underline m = 10^{-10}$, $\overline m = 10^{10}$,
$m^k_a = \dfrac{(s^{k-1})^T y^{k-1}}{\norm{s^{k-1}}^2}$, $m^k_b = \dfrac{\norm{y^{k-1}}^2}{(s^{k-1})^T y^{k-1}}$,
$s^{k-1} = \tilde x^k_{N_{\ell_1}^k}-\tilde x^{k-1}_{N_{\ell_1}^k}$ and $y^{k-1} = \nabla_{N_{\ell_1}^k} \varphi(\tilde x^k) - \nabla_{N_{\ell_1}^k} \varphi(\tilde x^{k-1})$.

The $\epsilon$ parameter appearing in the active-set estimate~\eqref{act_set_l1}
should satisfy Assumption~\ref{ass:eps_l1} to guarantee the descent property established in Proposition~\ref{prop1_l1} and the convergence of the algorithm.
Since the Lipschitz constant $L$ is in general unknown, we approximate $\epsilon$ following the same strategy as in~\cite{cristofari:2017,cristofari:2020,desantis2016fast}, where similar estimates are used.
Starting from $\epsilon = 10^{-6}$, we update its value along the iterations, reducing it whenever the expected decrease in the objective, stated in Proposition~\ref{prop1_l1}, is not obtained.

In our experiments, we implemented \ASl1\ in Matlab and compared it with the two following first-order methods, implemented in Matlab as well:
\begin{itemize}
\item a spectral projected gradient method with non-monotone line search, which will be referred to as \texttt{NM-SPG}, downloaded from Mark Schmidt's webpage \url{https://www.cs.ubc.ca/~schmidtm/Software/minConf.html};
\item the away-step Frank-Wolfe method with Armijo line search~\cite{bomze2019first,bomze:2019}, which will be referred to as \texttt{AFW}\footnote{\texttt{AFW} was run by reformulating~\eqref{probl1} as an optimization problem over the unit simplex, exploiting the fact that the feasible set is a convex combination of the vectors $\pm \tau e_i$, $i=1,\ldots,n$.}.
\end{itemize}

For every considered problem, we set the starting point \kblue{equal to the origin} and
we first run \ASl1, stopping when
\[
\|x^k - P\bigl(x^k-\nabla \varphi(x^k)\bigr)_{\ell_1}\| \le 10^{-6},
\]
where $P(\cdot)_{\ell_1}$ denotes the projection onto the $\ell_1$-ball.
Then, the other methods were run with the same starting point and were
stopped at the first iteration $k$ such that
\[
\varphi(x^k) \le f^* + 10^{-6}(1+|f^*|),
\]
with $f^*$ being the objective value found by $\ASl1$.
A time limit of $3600$ seconds was also included in all the considered methods.

In \texttt{NM-SPG}, we used the default parameters (except for those concerning the stopping condition).
Moreover, in \ASl1 and \texttt{NM-SPG} we employed the same projection algorithm~\cite{condat2016fast}, downloaded from Laurent Condat's webpage \url{https://lcondat.github.io/software.html}.

In all codes, we made use of the Matlab \textit{sparse} operator to compute $\varphi(x)$ and $\nabla \varphi(x)$, in order to exploit the problem structure and save computational time.
The experiments were run on an Intel Xeon(R) CPU E5-1650 v2 @ 3.50GHz with 12 cores and 64 Gb RAM.

\kblue{The \ASl1\ software is available at \url{https://github.com/acristofari/as-l1}.}

\subsection{Comparison on LASSO instances}
We considered $10$ artificial instances of LASSO problems, where the objective function $\varphi (x)$ takes the form of~\eqref{LASSO_obj}.
Each instance was created by first generating a matrix $A \in \R^{m \times n}$ with elements randomly drawn from a uniform distribution on the interval $(0,1)$, using $n=2^{15}$ and $m = n/2$.
Then, a vector $x^*$ was generated with all zeros, except for $\text{round}(0.05m)$ components, which were randomly set to $1$ or $-1$.
Finally, we set $b = Ax^* + 0.001 v$, where $v$ is a vector with elements randomly drawn from the standard normal distribution,
and the $\ell_1$-sphere \kblue{radius} $\tau$ was set to $0.99\|x^*\|_1$.

The detailed comparison on the LASSO instances is reported in Table~\ref{tab:LASSO}.
For each instance and each algorithm, we report the final objective function value found, the CPU time needed to satisfy the stopping criterion and the percentage of zeros in the final solution, with a tolerance of $10^{-5}$.
In case an algorithm reached the time limit on an instance, we consider as final solution and final objective value those related to the last iteration performed. \texttt{NM-SPG} reached the time limit on all instances, being very far from $f^*$ on $6$ instances out of $10$, with a difference of even two order of magnitude.
\texttt{AFW} gets the same solutions as those obtained by $\ASl1$, being however
an order of magnitude slower than $\ASl1$.

The same picture is given by Figure~\ref{fig:plot_LASSO}, where we report the average optimization error $f(x^k)-f_{\text{best}}$ over the $10$ instances, with $f_{\text{best}}$ being the minimum objective value found by the algorithms.
We can notice that $\ASl1$ clearly outperforms the other two methods.

\begin{table}
\caption{Comparison on $10$ LASSO instances.
For each method, the first column (Obj) indicates the final objective value, the second column (CPU time) indicates the required time in seconds, where a star means that the time limit of $3600$ seconds was reached, and the third column (\%zeros) indicates the percentage of zeros in the final solution, with a tolerance of $10^{-5}$. \kblue{For each problem, the fastest algorithm is highlighted in bold.}}\label{tab:LASSO}
\centering
{\scalebox{.92}{
\begin{tabular}{| c c c | c c c | c c c |}
\hline
\multicolumn{3}{|c|}{\ASl1} & \multicolumn{3}{|c|}{\texttt{NM-SPG}} & \multicolumn{3}{|c|}{\texttt{AFW}} \bigstrut[t]\\
Obj & CPU time & \%zeros & Obj & CPU time & \%zeros & Obj & CPU time & \%zeros \bigstrut[t] \bigstrut[b] \\
\hline
$54.20$ & $\textbf{315.85}$ & $97.49$ & $54.20$ & $*$ & $97.49$ & $54.20$ & $2762.90$ & $97.49$ \bigstrut[t] \\
$52.32$ & $\textbf{366.90}$ & $97.50$ & $5823.87$ & $*$ & $80.69$ & $52.32$ & $3046.89$ & $97.50$ \\
$53.95$ & $\textbf{449.67}$ & $97.50$ & $1040.99$ & $*$ & $85.12$ & $53.95$ & $3023.94$ & $97.50$ \\
$54.04$ & $\textbf{292.83}$ & $97.50$ & $2215.88$ & $*$ & $83.45$ & $54.04$ & $3050.17$ & $97.50$ \\
$52.98$ & $\textbf{330.65}$ & $97.50$ & $841.57$ & $*$ & $85.21$ & $52.98$ & $2798.97$ & $97.50$ \\
$53.54$ & $\textbf{387.79}$ & $97.50$ & $53.56$ & $*$ & $97.50$ & $53.54$ & $3006.38$ & $97.50$ \\
$52.71$ & $\textbf{806.80}$ & $97.50$ & $3927.10$ & $*$ & $82.23$ & $52.71$ & $2837.90$ & $97.50$ \\
$53.58$ & $\textbf{580.89}$ & $97.50$ & $4108.25$ & $*$ & $81.93$ & $53.58$ & $2768.45$ & $97.50$ \\
$52.61$ & $\textbf{402.03}$ & $97.50$ & $1750.54$ & $*$ & $83.37$ & $52.61$ & $2924.38$ & $97.50$ \\
$53.36$ & $\textbf{535.10}$ & $97.50$ & $53.89$ & $*$ & $97.49$ & $53.36$ & $2948.41$ & $97.50$ \bigstrut[b] \\
\hline
\end{tabular}
}}
\end{table}

\begin{figure}[htp]
\centering
\includegraphics[scale=0.5, trim = 0cm 0cm 0cm 0cm, clip]{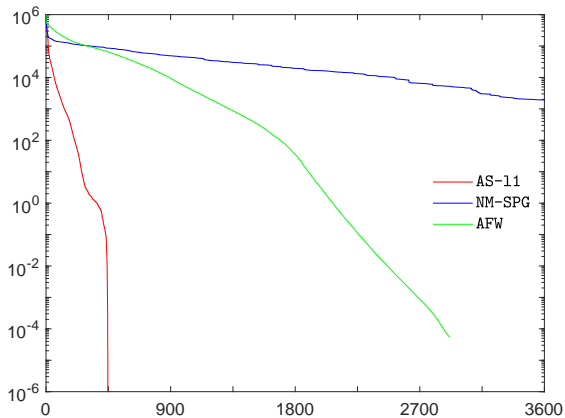}
\caption{Average optimization error over LASSO instances ($y$ axis) vs CPU time in seconds ($x$ axis). The $y$ axis is in logarithmic scale.}
\label{fig:plot_LASSO}
\end{figure}

\subsection{Comparison on logistic regression instances}
For the comparison among \ASl1, \texttt{NM-SPG} and \texttt{AFW} on $\ell_1$-constrained logistic regression problems, where the objective function $\varphi (x)$ takes the form of~\eqref{logreg_obj}, we considered $11$ datasets for binary classification from the literature,
with a number of samples $l$ between $100$ and $25,000$,
and a number of attributes $n$ between $500$ and $100,000$.
We report the complete list of datasets in Table~\ref{table:datasets_log_reg}.

\begin{table}[h]
\caption{Datasets used in the comparison on $\ell_1$-constrained logistic regression problems, where $l$ is the number of instances and $n$ is the number of attributes.}\label{table:datasets_log_reg}
\centering
\begin{tabular}{| c c c c |}
\hline
Dataset & $l$ & $n$ & Reference \bigstrut[t] \bigstrut[b] \\
\hline
Arcene (training set) & $100$ & $10,000$ & \cite{Dua:2019,guyon2004result} \bigstrut[t] \\
Dexter (training set) & $300$ & $19,999$ & \cite{Dua:2019,guyon2004result} \\
Dorothea (training set) & $800$ & $100,000$ & \cite{Dua:2019,guyon2004result} \\
Farm-ads-vect & $4,143$ & $54,877$ & \cite{Dua:2019} \\
Gisette (training set) & $6,000$ & $5,000$ & \cite{libsvm_data,Dua:2019,guyon2004result} \\
Madelon (training set) & $2,000$ & $500$ & \cite{libsvm_data,Dua:2019,guyon2004result} \\
Rcv1\_train.binary (training set) & $20,242$ & $47,236$ & \cite{libsvm_data,lewis2004rcv1} \\
Real-sim & $72,309$ & $20,958$ & \cite{libsvm_data} \\
Swarm (Aligned) & $24,016$ & $2,400$ & \cite{Dua:2019} \\
Swarm (Flocking) & $24,016$ & $2,400$ & \cite{Dua:2019} \\
Swarm (Grouped) & $24,016$ & $2,400$ & \cite{Dua:2019} \bigstrut[b] \\
\hline
\end{tabular}
\end{table}

For each dataset, we considered different values of the $\ell_1$-sphere \kblue{radius} $\tau$, that is, $0.01n$, $0.03n$ and $0.05n$. The final results are shown in Table~\ref{tab:log_reg}.
As before, for each instance and each algorithm, we report the final objective function value found, the CPU time needed to satisfy the stopping criterion and the percentage of zeros in the final solution, with a tolerance of $10^{-5}$.
In case an algorithm reached the time limit on an instance, we consider as final solution and final objective value those related to the last iteration performed.
Excluding the instance obtained from the Rev1\_train.binary dataset with $\tau = 0.05n$, the three solvers get very similar solutions on all instances, with a difference of $0.02$ at most in the final objective values.
When considering $\tau = 0.01n$, \ASl1 is 
the fastest solver on $4$ instances out of $11$.
Note that on the instance from the Farm-ads-vect dataset, \ASl1 is able to get the solution in a third of the CPU time needed by the other two solvers.
On the other instances, the CPU time needed by \ASl1 is always comparable with the one needed by the fastest solver.
Looking at the results for larger values of $\tau$, we can notice that the instances get more difficult and in general less sparse.
For $\tau = 0.03n$ and $\tau = 0.05n$, \ASl1 is 
the fastest solver on all the instances but two, those obtained from the Arcene and the Dorothea datasets, which are however addressed within $2$ seconds. On other instances, such as those built from the Real-sim and the Rev1\_train.binary datasets, \ASl1 is one or even two orders of magnitude faster with respect to \texttt{NM-SPG} and \texttt{AFW}. 

\begin{landscape}
\tiny
\begin{longtable}{| c |  c | c c c | c c c | c c c |}
\caption{Comparison on $\ell_1$-constrained logistic regression problems with different values of the sphere \kblue{radius} $\tau$.
For each method, the first column (Obj) indicates the final objective value, the second column (CPU time) indicates the required time in seconds, where a star means that the time limit of $3600$ seconds was reached, and the third column (\%zeros) indicates the percentage of zeros in the final solution, with a tolerance of $10^{-5}$. \kblue{For each problem, the fastest algorithm is highlighted in bold.}}\label{tab:log_reg}
\\ \hline
\multicolumn{1}{|c|}{\multirow{2}*{$\tau$}} &
\multicolumn{1}{|c|}{\multirow{2}*{Dataset}} &
\multicolumn{3}{|c|}{\ASl1} & \multicolumn{3}{|c|}{\texttt{NM-SPG}} & \multicolumn{3}{|c|}{\texttt{AFW}} \bigstrut[t] \\
& & Obj & CPU time & \%zeros & Obj & CPU time & \%zeros & Obj & CPU time & \%zeros \bigstrut[t] \bigstrut[b] \\
\hline
\multirow{11}*{$0.01n$} & Arcene & $0.20$ & $\textbf{8.31}$ & $99.31$ & $0.20$ & $22.34$ & $99.30$ & $0.20$ & $12.14$ & $99.32$ \bigstrut[t] \\
& Dexter & $3.88$ & $\textbf{1.09}$ & $99.41$ & $3.88$ & $1.30$ & $99.41$ & $3.88$ & $11.81$ & $99.41$ \\
& Dorothea & $0.00$ & $0.75$ & $70.09$ & $0.00$ & $\textbf{0.58}$ & $51.79$ & $0.00$ & $5.58$ & $99.82$ \\
& Farm-ads-vect & $146.38$ & $\textbf{576.62}$ & $98.39$ & $146.38$ & $1547.40$ & $98.39$ & $146.38$ & $1699.47$ & $98.41$ \\
& Gisette & $0.00$ & $\textbf{9.70}$ & $8.20$ & $0.00$ & $18.42$ & $8.22$ & $0.00$ & $354.81$ & $78.60$ \\
& Madelon & $1314.89$ & $0.60$ & $95.01$ & $1314.89$ & $\textbf{0.41}$ & $95.01$ & $1314.89$ & $0.50$ & $94.81$ \\
& Rcv1\_train.binary & $2344.19$ & $11.15$ & $98.86$ & $2344.19$ & $\textbf{7.77}$ & $98.85$ & $2344.19$ & $218.41$ & $98.87$ \\
& Real-sim & $21693.00$ & $4.33$ & $99.47$ & $21693.03$ & $\textbf{4.26}$ & $99.47$ & $21693.03$ & $104.79$ & $99.47$ \\
& Swarm (Aligned) & $2504.35$ & $51.52$ & $97.96$ & $2504.35$ & $\textbf{34.05}$ & $97.96$ & $2504.35$ & $50.48$ & $97.96$ \\
& Swarm (Flocking) & $4823.39$ & $56.99$ & $97.79$ & $4823.40$ & $\textbf{27.48}$ & $97.75$ & $4823.40$ & $29.68$ & $97.79$ \\
& Swarm (Grouped) & $5062.87$ & $47.92$ & $97.67$ & $5062.88$ & $43.53$ & $97.67$ & $5062.88$ & $\textbf{32.22}$ & $97.67$ \bigstrut[b] \\
\hline
\multirow{11}*{$0.03n$} & Arcene & $0.00$ & $1.91$ & $81.27$ & $0.00$ & $1.80$ & $83.80$ & $0.00$ & $\textbf{1.26}$ & $98.81$ \bigstrut[t] \\
& Dexter & $0.00$ & $\textbf{0.66}$ & $99.06$ & $0.00$ & $0.92$ & $99.06$ & $0.00$ & $7.01$ & $99.16$ \\
& Dorothea & $0.00$ & $0.57$ & $11.97$ & $0.00$ & $\textbf{0.35}$ & $11.90$ & $0.00$ & $5.18$ & $99.84$ \\
& Farm-ads-vect & $12.68$ & $\textbf{676.07}$ & $98.08$ & $12.68$ & $2191.57$ & $98.08$ & $12.69$ & $*$ & $98.10$ \\
& Gisette & $0.00$ & $8.91$ & $8.26$ & $0.00$ & $\textbf{7.15}$ & $7.74$ & $0.00$ & $375.52$ & $77.86$ \\
& Madelon & $1262.91$ & $\textbf{1.63}$ & $82.44$ & $1262.91$ & $2.38$ & $82.24$ & $1262.91$ & $2.04$ & $82.44$ \\
& Rcv1\_train.binary & $965.13$ & $\textbf{78.43}$ & $97.03$ & $965.13$ & $620.44$ & $97.00$ & $965.13$ & $3206.36$ & $97.15$ \\
& Real-sim & $13174.84$ & $\textbf{9.40}$ & $98.01$ & $13174.85$ & $96.91$ & $98.01$ & $13174.85$ & $785.63$ & $98.04$ \\
& Swarm (Aligned) & $115.10$ & $\textbf{147.98}$ & $96.46$ & $115.10$ & $199.90$ & $96.46$ & $115.10$ & $277.14$ & $96.46$ \\
& Swarm (Flocking) & $1027.82$ & $187.41$ & $94.13$ & $1027.82$ & $\textbf{130.88}$ & $94.13$ & $1027.82$ & $262.78$ & $94.09$ \\
& Swarm (Grouped) & $966.16$ & $\textbf{199.21}$ & $94.29$ & $966.16$ & $475.48$ & $94.25$ & $966.16$ & $281.29$ & $94.29$ \bigstrut[b] \\
\hline
\multirow{11}*{$0.05n$} & Arcene & $0.00$ & $0.90$ & $13.09$ & $0.00$ & $\textbf{0.63}$ & $24.94$ & $0.00$ & $1.23$ & $98.71$ \bigstrut[t] \\
& Dexter & $0.00$ & $0.19$ & $89.91$ & $0.00$ & $\textbf{0.15}$ & $90.81$ & $0.00$ & $2.08$ & $99.11$ \\
& Dorothea & $0.00$ & $0.59$ & $11.90$ & $0.00$ & $\textbf{0.32}$ & $11.90$ & $0.00$ & $4.82$ & $99.86$ \\
& Farm-ads-vect & $9.38$ & $*$ & $97.77$ & $9.38$ & $*$ & $97.69$ & $9.40$ & $*$ & $97.86$ \\
& Gisette & $0.00$ & $8.18$ & $8.24$ & $0.00$ & $\textbf{8.01}$ & $7.94$ & $0.00$ & $375.69$ & $78.42$ \\
& Madelon & $1225.19$ & $\textbf{2.25}$ & $72.26$ & $1225.19$ & $10.35$ & $71.86$ & $1225.19$ & $3.61$ & $72.26$ \\
& Rcv1\_train.binary & $470.53$ & $\textbf{275.16}$ & $96.06$ & $470.54$ & $2336.48$ & $96.04$ & $473.33$ & $*$ & $96.57$ \\
& Real-sim & $10087.92$ & $\textbf{27.83}$ & $96.37$ & $10087.93$ & $161.01$ & $96.36$ & $10087.93$ & $1771.75$ & $96.43$ \\
& Swarm (Aligned) & $6.80$ & $\textbf{296.74}$ & $95.34$ & $6.80$ & $429.97$ & $95.34$ & $6.80$ & $510.29$ & $95.34$ \\
& Swarm (Flocking) & $237.26$ & $\textbf{285.46}$ & $92.54$ & $237.26$ & $413.54$ & $92.54$ & $237.26$ & $532.85$ & $92.54$ \\
& Swarm (Grouped) & $210.80$ & $\textbf{260.47}$ & $92.71$ & $210.80$ & $937.63$ & $92.71$ & $210.80$ & $663.67$ & $92.71$ \bigstrut[b] \\
\hline
\end{longtable}
\end{landscape}

\begin{figure}[h]
\centering
\includegraphics[scale=0.465, trim = 2.5cm 12.75cm 1.9cm 0.8cm, clip]{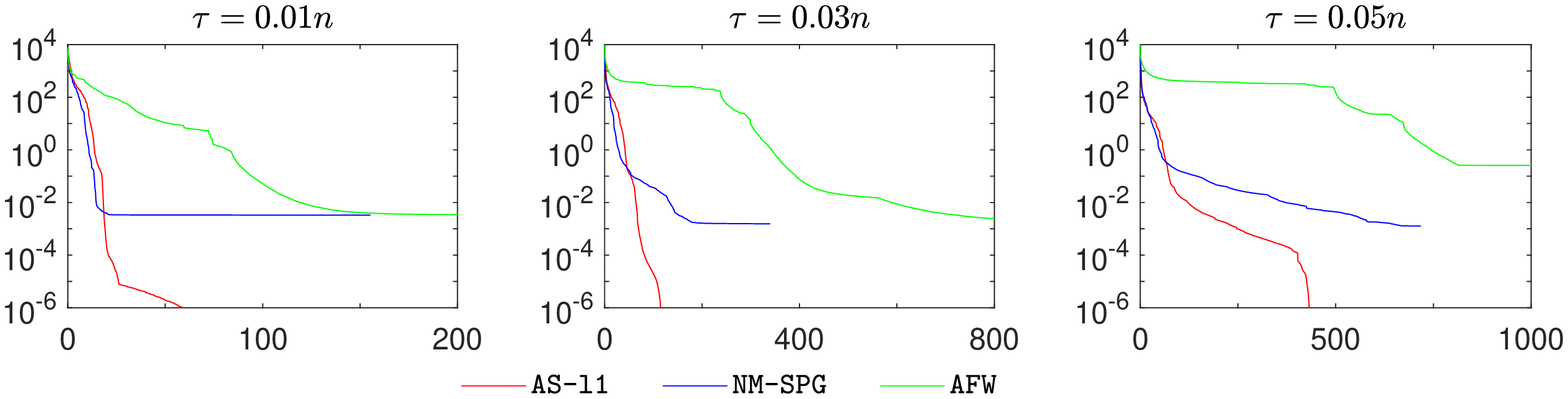}
\caption{Average optimization error \kblue{over} $\ell_1$-constrained logistic regression instances ($y$ axis) vs CPU time in seconds ($x$ axis). The $y$ axis is in logarithmic scale.}
\label{fig:plot_log_reg}
\end{figure}
In Figure~\ref{fig:plot_log_reg} we report the average optimization error $f(x^k)-f_{\text{best}}$ over the $11$ instances, for each value of $\tau$, with $f_{\text{best}}$ being the minimum objective value found by the algorithms.
We can notice that \texttt{AFW} is outperformed by the other two algorithms, which have similar performance when considering the average optimization error above $10^{-2}$.
When considering the average optimization error below $10^{-2}$, we see that \ASl1 outperforms \texttt{NM-SPG} too.

\section{Conclusions}\label{sec:conc}
In this paper, we focused on minimization problems over the $\ell_1$-ball and described
a tailored active-set algorithm.
We developed a strategy to guess, along the iterations of the algorithm, which variables should be zero at a solution. A reduction in terms of objective function value is guaranteed by simply fixing to zero those variables estimated to be active.
The active-set estimate is used in combination with a projected spectral gradient direction and a non-monotone Armijo line search. We analyzed in depth the global convergence of the proposed algorithm. 
The numerical results show the efficiency of the method on LASSO and sparse logistic regression instances, in comparison with two widely-used first-order methods.

\bibliographystyle{plain}
\bibliography{as_l1}   

\end{document}